\theoremstyle{definition}
\newtheorem{definition}{Definition}[section]
\newtheorem{fact}[definition]{Fact}
\newtheorem{theorem}[definition]{Theorem}
\newtheorem{lemma}[definition]{Lemma} 
\newtheorem{corollary}[definition] {Corollary}                
\numberwithin{equation}{section} 
\title{The cofinality of the strong measure zero ideal for $\kappa$ inaccessible}
\author{Johannes Philipp Sch\"urz \footnote{supported by FWF project I3081} }
\date{} 
\begin{document}

\maketitle

\begin{abstract}
We investigate the cofinality of the strong measure zero ideal for $\kappa$ inaccessible, and show that it is independent of the size of $2^\kappa$.
\end{abstract} 

\section{Introduction}

In this paper we continue to investigate the strong measure zero sets on $2^\kappa$ for $\kappa$ at least inaccessible (see \cite{Me}).\\
\\
Halko generalized the notion of strong measure zero to uncountable cardinals as follows (see \cite{Halko}):

\begin{definition}
Let $X \subset 2^\kappa$. We say $X$ has strong measure zero iff
$$\forall f\in \kappa^\kappa \, \exists (\eta_i)_{i < \kappa} \colon \big (\, \forall i < \kappa \,\, \eta_i \in 2^{f(i)} \, \big ) \land  X \subset \bigcup_{i< \kappa} [\eta_i].$$
We shall denote this by $X \in \mathcal{SN}$.
\end{definition}

The following is an easy fact:

\begin{fact}
$\mathcal{SN}$ is a $\kappa$-closed, proper ideal on $2^\kappa$ which contains all singletons.
\end{fact}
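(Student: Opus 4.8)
The plan is to verify the four assertions separately; three of them are essentially bookkeeping, and only properness carries real content. Downward closure and membership of $\emptyset$ are immediate: a witnessing sequence $(\eta_i)_{i<\kappa}$ for $X$ covers every subset of $X$ as well, and $\emptyset$ is contained in any union of cylinders. For the singletons, I would fix $x \in 2^\kappa$ and $f \in \kappa^\kappa$ and simply set $\eta_0 = x \restriction f(0)$, so that $x \in [\eta_0]$, letting the remaining $\eta_i$ be arbitrary elements of $2^{f(i)}$ (nonempty, as it contains at least the constant-$0$ string). This gives $\{x\} \in \mathcal{SN}$.

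Next I would treat $\kappa$-closedness, which subsumes closure under finite unions and hence, together with the above, yields the full ideal property. Let $\lambda < \kappa$ and $X_\alpha \in \mathcal{SN}$ for $\alpha < \lambda$; I want $\bigcup_{\alpha<\lambda} X_\alpha \in \mathcal{SN}$. The idea is to split the index set. Fixing $f$, I partition $\kappa = \bigsqcup_{\alpha<\lambda} S_\alpha$ into $\lambda$ pieces each of size $\kappa$ (possible since $\lambda \cdot \kappa = \kappa$ for every infinite $\kappa$ and $\lambda < \kappa$), with bijections $e_\alpha \colon \kappa \to S_\alpha$. Applying the definition of $\mathcal{SN}$ to each $X_\alpha$ and the reindexed function $j \mapsto f(e_\alpha(j))$ produces sequences $(\eta^\alpha_j)_{j<\kappa}$ covering $X_\alpha$. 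Reassembling along the partition, i.e.\ setting $\eta_{e_\alpha(j)} = \eta^\alpha_j$, yields a single sequence with $\eta_i \in 2^{f(i)}$ whose union of cylinders contains every $X_\alpha$, as desired.

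Finally, for properness I must exhibit one $f$ defeating every admissible sequence, which is where the only genuine thought is needed. I would take $f(i) = i+1$; the point is that $f(i) > 0$, so no single cylinder $[\eta_i]$ equals all of $2^\kappa$ (note that $f(i) = 0$ would force $\eta_i = \emptyset$ and $[\eta_i] = 2^\kappa$, trivializing any cover). Given any $(\eta_i)_{i<\kappa}$ with $\eta_i \in 2^{i+1}$, I define the diagonal point $x \in 2^\kappa$ by $x(i) = 1 - \eta_i(i)$. Then $x(i) \neq \eta_i(i)$ for every $i$, hence $x \restriction (i+1) \neq \eta_i$, i.e.\ $x \notin [\eta_i]$ for all $i$; thus $x \notin \bigcup_{i<\kappa} [\eta_i]$ and the cover fails, giving $2^\kappa \notin \mathcal{SN}$. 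The main obstacle is really just the quantifier structure here: one must pin down a single $f$ in advance and, critically, keep it away from the value $0$, after which the diagonalization is forced and elementary. No use of inaccessibility beyond the harmless reindexing of $\kappa$ is required.
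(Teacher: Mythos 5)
Your proof is correct and is the standard argument; the paper states this as an easy fact without proof, so there is nothing to compare it against. One small adjustment: judging from the paper's proof that $\mathcal{I}(f)$ is $\kappa$-closed (which handles a family $(A_k)_{k<\kappa}$), the intended meaning of ``$\kappa$-closed'' is closure under unions of $\kappa$ many sets rather than only fewer than $\kappa$, but your partition argument goes through verbatim with $\lambda=\kappa$, since $\kappa$ splits into $\kappa$ pieces each of size $\kappa$.
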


Therefore, the following cardinal characteristics are well defined:

\begin{definition}
$$\text{add}(\mathcal{SN}):= \min \{\vert \mathcal{F} \vert \colon \mathcal{F} \subset \mathcal{SN} \land \bigcup \mathcal{F} \notin \mathcal{SN}\}$$
$$\text{cov}(\mathcal{SN}):= \min \{\vert \mathcal{F} \vert \colon \mathcal{F} \subset \mathcal{SN} \land \bigcup \mathcal{F} =2^\kappa\}$$
$$\text{non}(\mathcal{SN}):= \min \{\vert X \vert \colon X \subset 2^\kappa \land  X \notin \mathcal{SN}\}$$
$$\text{cof}(\mathcal{SN}):= \min \{\vert \mathcal{F} \vert \colon \mathcal{F} \subset \mathcal{SN} \land 
\forall X \in \mathcal{SN} \, \exists Y \in \mathcal{F} \,\, X \subset Y\}$$
\end{definition}

In \cite{Yorioka}, Yorioka introduced the so-called Yorioka ideals approximating the ideal of strong measure zero sets on $2^\omega$. We will generalize this notion to $\kappa$ and use it to investigate $\text{cof}(\mathcal{SN})$. Eventually, we shall show that $\text{cof}(\mathcal{SN}) < \mathfrak{c}_\kappa$, $\text{cof}(\mathcal{SN}) = \mathfrak{c}_\kappa$ as well as $\text{cof}(\mathcal{SN}) > \mathfrak{c}_\kappa$ are all consistent relative to ZFC.\\
\\
In the last chapter we first generalize the Galvin–Mycielski–Solovay theorem (see \cite{Judah}) to $\kappa$ inaccessible. This result was originally proven by Wohofsky (see \cite{Wohofsky}). Then we follow Pawlikowski \cite{Pawlikowski} and show the relative consistency of $\text{cov}(\mathcal{SN}) < \text{add}(\mathcal{M}_\kappa)$ for $\kappa$ strongly unfoldable (see Definition \ref{D3}).\\
\\
Strong measure zero sets for $\kappa$ regular uncountable have also been studied in \cite{Halko} and \cite{Halko2}. More questions have been stated in \cite{Questions}.\\
\\
Last but not least, I would like to thank my advisor Martin Goldstern for the very helpful comments during the preparation of this paper.

\section{Prerequisites}

We start with several definitions:

\begin{definition}
Let $f,g \in \kappa^\kappa$ and $f$ strictly increasing.
\begin{itemize}
\item We define the order $\ll$ on $\kappa^\kappa$ as follows:
$f\ll g$ iff $\forall \alpha < \kappa \, \, \exists \beta < \kappa \, \, \forall i \geq \beta \colon g(i) \geq f(i^\alpha)$. Here $i^\alpha$ is defined using ordinal arithmetic.
\item For $\sigma \in (2^{<\kappa})^\kappa$ define $g_\sigma$ as follows: $g_\sigma (i):= \text{dom}(\sigma(i))$.
\item For $\sigma \in (2^{<\kappa})^\kappa$ define $Y(\sigma) \subseteq 2^\kappa$ as follows: $Y(\sigma):= \bigcap_{i < \kappa} \bigcup_{j \geq i} \, [\sigma(j)]$.
\item Define $\mathcal{S}(f) \subseteq (2^{<\kappa})^\kappa$ as follows: $\mathcal{S}(f):=\{ \sigma \in (2^{<\kappa})^\kappa \colon f \ll g_\sigma\}$.
\item Define $A \subseteq 2^\kappa$ to be $f$-small iff there exists $\sigma \in \mathcal{S}(f)$ such that $A \subseteq Y(\sigma)$.
\item Set $\mathcal{I}(f):=\{A \subseteq 2^\kappa \colon A \, \text{is} \,  f\text{-small} \}$.
\end{itemize}
\end{definition}

\begin{definition}
Let $f \in \kappa^\kappa$ be strictly increasing and let $\sigma \in \mathcal{S}(f)$. For every $\alpha < \kappa$ we define $M_\sigma^\alpha$ to be the minimal ordinal $\geq 2$ such that $\forall i \geq M_\sigma ^\alpha \colon g_\sigma(i) \geq f(i^\alpha)$.
\end{definition}

\begin{lemma}
$\mathcal{I}(f)$ forms a $\kappa$-closed ideal.
\end{lemma}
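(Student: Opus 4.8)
The plan is to show that $\mathcal{I}(f)$ satisfies the two defining properties of a $\kappa$-closed ideal: closure under subsets, and closure under unions of size $<\kappa$ (the singletons lying in the ideal and properness being either immediate or handled separately). Closure under subsets is trivial, since if $A \subseteq B$ and $B \subseteq Y(\sigma)$ for some $\sigma \in \mathcal{S}(f)$, then $A \subseteq Y(\sigma)$ with the very same witness $\sigma$. So the real content is $\kappa$-closure: given a family $(A_\alpha)_{\alpha < \lambda}$ with $\lambda < \kappa$ and witnesses $\sigma_\alpha \in \mathcal{S}(f)$ such that $A_\alpha \subseteq Y(\sigma_\alpha)$, I must produce a single $\sigma \in \mathcal{S}(f)$ with $\bigcup_{\alpha < \lambda} A_\alpha \subseteq Y(\sigma)$.

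The natural idea is to interleave the $\sigma_\alpha$'s into one sequence $\sigma$. First I would partition $\kappa$ into $\lambda$-many disjoint cofinal pieces $(S_\alpha)_{\alpha < \lambda}$, each of size $\kappa$, and fix order isomorphisms $e_\alpha \colon \kappa \to S_\alpha$. Then define $\sigma$ by placing $\sigma_\alpha(i)$ at position $e_\alpha(i)$, i.e. $\sigma(e_\alpha(i)) := \sigma_\alpha(i)$. Since each $S_\alpha$ is cofinal in $\kappa$, the tail intersection in the definition of $Y$ will still see all of each $\sigma_\alpha$, so I expect $Y(\sigma) \supseteq \bigcup_{\alpha<\lambda} Y(\sigma_\alpha) \supseteq \bigcup_{\alpha<\lambda} A_\alpha$; concretely, if $x \in Y(\sigma_\alpha)$ then for cofinally many $j \in S_\alpha$ we have $x \in [\sigma(j)]$, which gives $x \in Y(\sigma)$.

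The step I expect to be the main obstacle is verifying that $\sigma \in \mathcal{S}(f)$, that is, $f \ll g_\sigma$. Here the spreading-out of the indices works against me: at a position $e_\alpha(i)$, the domain $g_\sigma(e_\alpha(i)) = g_{\sigma_\alpha}(i)$ is controlled by $f(i^\beta)$, but the index relevant to the $\ll$-requirement is $e_\alpha(i)$, which is (much) larger than $i$. So I would need to check that the interleaving does not destroy the asymptotic domination. The key observation is that $\ll$ is quite robust: its definition quantifies $\forall \alpha \, \exists \beta \, \forall i \geq \beta$, so a polynomial-type blowup of the index is absorbed by the freedom in choosing $\beta$ and by using the $i^\alpha$ slack built into the order. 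Concretely, I would use the fact that $e_\alpha(i) \leq$ some fixed ordinal-arithmetic bound in $i$ (for $\lambda < \kappa$ and a reasonable partition one can arrange $e_\alpha(i) < i^2$ or similar for large $i$), and then exploit that for each required $\gamma$ one can find $\delta$ so that $g_\sigma(n) \geq f(n^\gamma)$ for all large $n$ by tracing back through whichever block $S_\alpha$ contains $n$ and applying $f \ll g_{\sigma_\alpha}$ with a suitably larger exponent.

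I would finish by recording that $\mathcal{I}(f)$ contains all singletons (a single point $x$ is covered by taking $\sigma(i) := x\restriction f(i^{?})$ along a fast-growing sequence, placing $x$ in $Y(\sigma)$ with $f \ll g_\sigma$) so that the ideal is nontrivial, and that it is proper (the whole space $2^\kappa$ is not $f$-small, since $Y(\sigma)$ for $\sigma \in \mathcal{S}(f)$ cannot cover $2^\kappa$ by a counting/diagonalization argument against the slow growth forced by $f \ll g_\sigma$). These last points are routine compared to the $\ll$-verification, which is the genuine heart of the lemma.
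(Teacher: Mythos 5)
There is a genuine gap: you only prove closure under unions of size $\lambda<\kappa$, whereas the lemma asserts (and the paper's proof establishes) closure under unions of $\kappa$-many $f$-small sets --- the paper's argument starts from a family $(A_k)_{k<\kappa}$, and this full $\kappa$-closure is what is actually used later, e.g.\ in Lemma~\ref{L2}, where one inductively absorbs $\bigcup_{\gamma<\beta}Y(\tau_\gamma)$ for $\beta$ ranging up to ordinals of cardinality $\kappa$. For $\lambda<\kappa$ your plan is sound and is essentially the paper's mechanism in miniature: interleave along a partition into $\lambda$ cofinal blocks, keep the position of $\sigma_\alpha(i)$ below a fixed ordinal power of $i$, and absorb that blowup by invoking $f\ll g_{\sigma_\alpha}$ with exponent $3\beta$ in place of $\beta$ (the thresholds $M_{\sigma_\alpha}^{3\beta}$ have a supremum below $\kappa$ when there are only $\lambda<\kappa$ of them, so a uniform tail exists). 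But none of this survives verbatim when $\lambda=\kappa$, and that case is the actual content of the lemma.

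Two things break for $\kappa$-many summands. First, to verify $f\ll g_\tau$ you must exhibit, for each exponent $\gamma$, a single tail of positions $n$ with $g_\tau(n)\ge f(n^\gamma)$; tracing $n=e_k(i)$ back into block $k$ requires $i\ge M_{\sigma_k}^{3\gamma}$, and $\sup_{k<\kappa}M_{\sigma_k}^{3\gamma}$ need not be bounded below $\kappa$, so no uniform tail exists unless block $k$ is made to \emph{start contributing} only at positions whose inner index already exceeds the thresholds $M_{\sigma_j}^{3\gamma'}$ for all $j,\gamma'\le k$. Second, with $\kappa$-many blocks one must still keep the position of $\sigma_k(i)$ bounded by a fixed power of $i$ while keeping every block cofinal. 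Handling both simultaneously is exactly what the paper's bookkeeping does: the ordinals $m_k=\sup\{M_j^{3\alpha}\colon j,\alpha\le k\}$ and the decomposition $i\mapsto(c(i),d(i),a(i),b(i),e(i))$, yielding $i\le(e(i)+a(i))^3$ together with the cofinality statement $\forall k\,\forall^{\infty}l\,\exists i\colon e(i)+a(i)=l\wedge b(i)=k$. Your proposal stops short of this construction, so as written it proves only ${<}\kappa$-closure, strictly weaker than the claim. (Your closing remarks on singletons and properness are harmless but not part of what the lemma asserts.)
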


\begin{proof}
$\mathcal{I}(f)$ is obviously closed under subsets. So we must show that it is also closed under $\kappa$-unions.\\
\\
Let $(A_k)_{k< \kappa}$ be a family of $f$-small sets and $(\sigma_k)_{k<\kappa}$ such that $A_k \subseteq Y(\sigma_k)$. We shall find a $\tau \in \mathcal{S}(f)$ such that $\bigcup_{k <\kappa} Y(\sigma_k) \subseteq Y(\tau)$. For every $k < \kappa$ let $g_k:=g_{\sigma_k}$ and let $M_k^\alpha:= M_{\sigma_k}^\alpha$ for ever $\alpha < \kappa$. Define $m_k :=\sup\{M_j^{3 \cdot \alpha} \colon j, \alpha \leq k\}$.\\
\\
We need the following definitions for $i \geq m_0$:
\begin{itemize}
\item Define $c(i) >0$ such that $m_0 + \sum_{j < c(i)} j \cdot m_j \leq i < m_0 + \sum_{j \leq c(i)} j \cdot m_j$.
\item Define $d(i):= m_0 +  \sum_{j < c(i)} j \cdot m_j$.
\item Define $a(i)$ and $b(i)$ such that $i - d(i) = c(i) \cdot a(i) + b(i)$ with $b(i) < c(i)$.
\item Define $e(i):=\sum_{j < c(i)}  m_j$.
\end{itemize}
The following are immediate consequences for $i > m_0$:
\begin{itemize}
\item $i \geq m_0 + \sum_{j < c'} j \cdot m_j \Rightarrow c(i) \geq c'$
\item $a(i) < m_{c(i)}$
\item $0 \leq b(i) < c(i) \leq e(i)$.
\item $d(i) \leq c(i) \cdot e(i)$  (show by induction)
\item $i= d(i) + c(i) \cdot a(i) + b(i) < c(i) \cdot e(i) + c(i) \cdot a(i) + c(i) = c(i) \cdot (e(i) + a(i) + 1) \leq ((e(i) + a(i)) \cdot (e(i) + a(i) + 1) \leq ((e(i) + a(i))^3$
\item $\forall k <\kappa \,\, \forall^{\infty} l < \kappa \,\, \exists i < \kappa \colon e(i) + a(i)=l \land b(i)=k$
\end{itemize}
The last statement can be deduced as follows: Given $k < \kappa$ choose $l \geq \sum_{j \leq k}  m_j$, so there is a $\tilde{c}>k$ such that $\sum_{j < \tilde{c}}  m_j \leq l <\sum_{j \leq \tilde{c}}  m_j$. Define $i:= m_0+ \sum_{j < \tilde{c}} j \cdot m_j + \tilde{c} \cdot (l - \sum_{j < \tilde{c}}  m_j) + k $. Then $c(i)=\tilde{c}$. (This follows because $m_{\tilde{c}} \geq (l - \sum_{j < \tilde{c}}  m_j) +1$ and so $\tilde{c} \cdot m_{\tilde{c}} > \tilde{c} \cdot (l - \sum_{j < \tilde{c}}  m_j) +k$.) Hence $d(i) = m_0+ \sum_{j < \tilde{c}} j \cdot m_j$, $a(i)= l - \sum_{j < \tilde{c}}  m_j$, $b(i)=k$ and $e(i)= \sum_{j < \tilde{c}}  m_j$. Hence $e(i) + a(i)=l$.

\begin{figure}[h]
 \centering
 \includegraphics[scale=0.9]{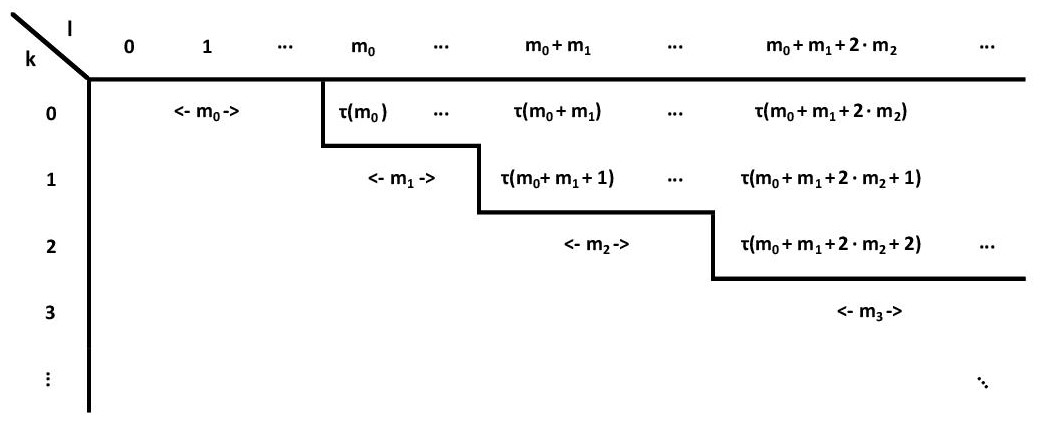}
 \caption{Definition of $\tau$}
 \label{Figure 1}
\end{figure}

Now we can define $\tau$: If $i\geq m_0$ set $\tau (i) = \sigma_{b(i)} (e(i)+ a(i))$. Else set $\tau (i)= \langle \, \rangle$. We must show that $\tau$ has the required properties.\\
First let us show that $\bigcup_{k < \kappa} Y(\sigma_k) \subseteq Y(\tau)$.  Let $x \in Y(\sigma_k)$ for some $k< \kappa$. If $l < \kappa$ is large enough, then there exists $i < \kappa$ such that $\tau (i) = \sigma_k (l)$. Hence $x \in Y(\tau)$.\\
Now we must show that $\tau \in \mathcal{S}(f)$. Let $\alpha < \kappa$ be arbitrary. If $i \geq m_0+\sum_{j \leq \alpha} j \cdot m_j$ (hence $c(i)> \alpha$), then the following (in-)equalities hold: $\tau (i) = \sigma_{b(i)}(e(i)+a(i))$ by definition, $g_{b(i)}(e(i)+a(i)) \geq f((e(i)+a(i))^{3 \cdot \alpha})$ since $e(i)+a(i) \geq M_{b(i)}^{3 \cdot \alpha}$ and $b(i), \alpha < c(i)$, and $f((e(i)+a(i))^{3 \cdot \alpha}) \geq f(i^\alpha)$ since $f$ is strictly increasing and $i \leq (e(i)+a(i))^3 $. Hence $g_{\tau} (i) \geq f(i^\alpha)$.
\end{proof}

The following fact is straightforward:

\begin{fact}
$\mathcal{SN} = \bigcap_{f \in \kappa^\kappa} \mathcal{I}(f)$.
\end{fact}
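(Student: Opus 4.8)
The plan is to prove the two inclusions separately, the crux in both directions being to reconcile the two notions of ``covering'' $X$: the definition of $\mathcal{SN}$ asks that each point be covered \emph{at least once}, whereas membership in $Y(\sigma)$ requires each point to be covered \emph{cofinally often}. A preliminary simplification streamlines the growth relation $f \ll g_\sigma$: for a given strictly increasing $f$ I would fix the single function $F(i) := f(i^i)$ (which lies in $\kappa^\kappa$ since $i^i < \kappa$ for $i < \kappa$) and observe that any $\sigma$ with $g_\sigma = F$ already satisfies $f \ll g_\sigma$, because for $i \geq \alpha$ we have $i^i \geq i^\alpha$ and hence $F(i) = f(i^i) \geq f(i^\alpha)$. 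Thus the relation $\ll$, with its $\alpha$-indexed family of tails, is reduced to a single pointwise demand on the lengths.

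For the inclusion $\mathcal{SN} \subseteq \bigcap_f \mathcal{I}(f)$, fix $X \in \mathcal{SN}$ and a strictly increasing $f$. I would fix a bijection $\langle \cdot , \cdot \rangle \colon \kappa \times \kappa \to \kappa$ with $\langle i, n \rangle \geq n$ (the Gödel pairing). For each $n < \kappa$ apply the strong measure zero property of $X$ to the function $i \mapsto F(\langle i , n \rangle)$, obtaining $(\eta^n_i)_{i<\kappa}$ with $\eta^n_i \in 2^{F(\langle i, n\rangle)}$ and $X \subseteq \bigcup_{i<\kappa} [\eta^n_i]$. Setting $\sigma(\langle i, n \rangle) := \eta^n_i$ then yields $g_\sigma(\langle i, n\rangle) = F(\langle i, n\rangle)$, i.e.\ $g_\sigma = F$ everywhere, so $\sigma \in \mathcal{S}(f)$. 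For coverage, given $x \in X$ and any $m < \kappa$, the $m$-th cover supplies some $i$ with $x \in [\eta^m_i] = [\sigma(\langle i, m\rangle)]$ and $\langle i, m \rangle \geq m$; hence $x$ lies in $[\sigma(j)]$ for cofinally many $j$, i.e.\ $x \in Y(\sigma)$. This gives $X \subseteq Y(\sigma)$, so $X$ is $f$-small.

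For the reverse inclusion, fix $X \in \bigcap_f \mathcal{I}(f)$ and an arbitrary $f^* \in \kappa^\kappa$; I would first pass to a strictly increasing $\bar f \geq f^*$ (solving the problem for $\bar f$ and truncating each resulting string to length $f^*(i)$ only enlarges the cylinders, preserving the cover). Since $X$ is $\bar f$-small, pick $\sigma \in \mathcal{S}(\bar f)$ with $X \subseteq Y(\sigma)$. Applying $\bar f \ll g_\sigma$ with $\alpha = 1$ gives a $\beta$ with $g_\sigma(j) \geq \bar f(j)$ for all $j \geq \beta$. I would then set $\eta_i := \sigma(i)\restriction \bar f(i)$ for $i \geq \beta$ (legitimate since $g_\sigma(i) \geq \bar f(i)$) and let $\eta_i$ be arbitrary in $2^{\bar f(i)}$ for $i < \beta$. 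Because $X \subseteq Y(\sigma) \subseteq \bigcup_{j \geq \beta}[\sigma(j)]$ and truncation only grows cylinders, $\bigcup_{i<\kappa}[\eta_i] \supseteq \bigcup_{i \geq \beta}[\sigma(i)] \supseteq X$; truncating further to length $f^*(i)$ finishes the verification that $X \in \mathcal{SN}$.

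The \emph{main obstacle} is the tension highlighted at the outset: a naive attempt to obtain cofinal coverage by repeating a single strong-measure-zero cover at cofinally many positions destroys the growth condition, since a short string reused at a large position $j$ has length far below $f(j^\alpha)$. The pairing trick is precisely what resolves this, as it ties the length available at position $\langle i, n \rangle$ to that very position through $F$, while still distributing a full cover into every ``column'' $n$. The remaining points are routine bookkeeping: checking $i^i < \kappa$ and the monotonicity $i^\alpha \leq i^i$ for ordinal exponentiation, and the two standard properties of the Gödel pairing (that it is a bijection and satisfies $\langle i, n\rangle \geq n$).
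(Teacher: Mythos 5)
Your proof is correct; the paper states this fact without proof, and your argument --- reducing the relation $\ll$ to the single length function $i \mapsto f(i^i)$ and using a pairing bijection with $\langle i,n\rangle \geq n$ to interleave $\kappa$ many covers so that each point of $X$ is caught cofinally often --- is exactly the standard verification one would supply. Both inclusions check out, including the truncation step (with $\alpha=1$ in the definition of $\ll$) in the reverse direction.
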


The next lemma implicitly shows that comeager sets cannot be strong measure zero:

\begin{lemma} \label{L1}
Let $A \subseteq 2^\kappa$ be comeager. Then there exists $f \in \kappa^\kappa$ such that for every $f$-small set $B$ the set $A\setminus  B $ is non-empty.
\end{lemma}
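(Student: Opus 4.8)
The plan is to reduce to the case where $A$ is an intersection of $\kappa$ many dense open sets, and then to build a single point of $A$ that eventually escapes every clopen set named by a would-be witness $\sigma$. Since $A$ is comeager, its complement is a union $\bigcup_{i<\kappa} N_i$ of nowhere dense sets, so $A \supseteq A' := \bigcap_{i<\kappa} U_i$ where $U_i := 2^\kappa \setminus \overline{N_i}$ is dense open. As $A' \subseteq A$, it suffices to find $f$ so that $A' \setminus B \neq \emptyset$ for every $f$-small $B$; and since any such $B$ lies in some $Y(\sigma)$ with $\sigma \in \mathcal{S}(f)$, it is enough to produce, for each $\sigma \in \mathcal{S}(f)$, a point $x \in A'$ with $x \notin [\sigma(j)]$ for all sufficiently large $j$ (which forces $x \notin Y(\sigma)$, as membership in $Y(\sigma)$ requires $x\in[\sigma(j)]$ cofinally often).

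The function $f$ will be read off from the ``moduli of density'' of the $U_i$. For $i,\ell < \kappa$ let $D(i,\ell)$ be the least ordinal such that every $s \in 2^{\le \ell}$ has an extension $t \supseteq s$ with $\mathrm{dom}(t) \le D(i,\ell)$ and $[t] \subseteq U_i$; because $U_i$ is dense open, because $\kappa$ is a strong limit (so $\lvert 2^{\le\ell}\rvert < \kappa$), and because $\kappa$ is regular, we get $D(i,\ell) < \kappa$, and $D(i,\cdot)$ is monotone by construction. I then fix in advance a bound $L \in \kappa^\kappa$ by the recursion $L(0)=0$, $L(\xi+1) = D(\xi,L(\xi))+1$, and $L(\lambda)=\sup_{\xi<\lambda}L(\xi)$ at limits, and finally choose $f$ strictly increasing with $f(\xi) > D(\xi,L(\xi))$ for all $\xi$. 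Crucially, $f$ and $L$ depend only on $A$, not on $\sigma$.

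Now, given $\sigma \in \mathcal{S}(f)$, the relation $f \ll g_\sigma$ applied with exponent $\alpha = 1$ (so $i^\alpha = i$) yields $\beta_0 < \kappa$ with $g_\sigma(j) = \mathrm{dom}(\sigma(j)) \ge f(j)$ for all $j \ge \beta_0$. I then build $x = \bigcup_{\xi<\kappa} s_\xi$ by a continuous increasing recursion on conditions $s_\xi \in 2^{<\kappa}$, maintaining $\mathrm{dom}(s_\xi) \le L(\xi)$. At stage $\xi$ I first extend $s_\xi$ to $t \supseteq s_\xi$ with $[t]\subseteq U_\xi$ and $\mathrm{dom}(t)\le D(\xi,\mathrm{dom}(s_\xi)) \le D(\xi,L(\xi))$, securing $x \in U_\xi$. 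If $\xi \ge \beta_0$ I then diagonalize against $\sigma(\xi)$: since $\mathrm{dom}(t) \le D(\xi,L(\xi)) < f(\xi) \le g_\sigma(\xi)$, the coordinate $c := \mathrm{dom}(t)$ lies in the domain of $\sigma(\xi)$, so I set $s_{\xi+1} := t ^\frown \langle 1 - \sigma(\xi)(c)\rangle$, guaranteeing $x \restriction g_\sigma(\xi) \ne \sigma(\xi)$ and hence $x \notin [\sigma(\xi)]$. The bound $\mathrm{dom}(s_{\xi+1}) \le L(\xi+1)$ is preserved, and limit stages cause no trouble by regularity. The resulting $x$ lies in every $U_\xi$, so $x \in A' \subseteq A$, while $x \notin [\sigma(j)]$ for every $j \ge \beta_0$, so $x \notin Y(\sigma) \supseteq B$; thus $x \in A \setminus B$.

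The main obstacle is an apparent circularity: the length to which I must extend in order to enter $U_\xi$ depends on the point I am building, yet $f$ must be fixed before $\sigma$ is revealed. The recursion defining $L$ resolves this by bounding the entire construction in advance, uniformly over all $\sigma \in \mathcal{S}(f)$; the only features of $\kappa$ used are regularity and the strong-limit property, precisely what inaccessibility supplies. A secondary point to verify is that a single free coordinate below $g_\sigma(\xi)$ suffices to escape $[\sigma(\xi)]$, which holds because $[\sigma(j)]$ constrains only the coordinates below $\mathrm{dom}(\sigma(j))$.
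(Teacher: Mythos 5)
Your proof is correct and follows essentially the same route as the paper: the paper first extracts a perfect tree from the comeager set and reads $f$ off bounds on its splitting levels, then diagonalizes a branch against $\sigma(i)$ coordinate by coordinate using $g_\sigma(i)\ge f(i)$; your density moduli $D(i,\ell)$ and the recursion for $L$ are exactly the splitting-level bounds of that tree, with the single branch built directly instead of via the perfect set. Your write-up also supplies the inductive diagonalization that the paper leaves implicit, so there is nothing to fix.
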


\begin{proof}
Let $A$ be comeager. We shall show that $A$ contains a perfect set, hence there exists $f \in \kappa^\kappa$ such that $A$ is not $f$-small: Let $P \subset 2^\kappa$ be a perfect set, and let $T \subset 2^{< \kappa}$ be a perfect tree such that $[T]=P$. Pick a function $f \in \kappa^\kappa$ such that $f(\alpha) > \sup \{ \text{dom}(t) \colon t \,\, \text{is in the}\,\, \alpha \text{th} \,\, \text{splitting level of} \,\,T\}$. Let $\sigma \in \mathcal{F}(f)$ be arbitrary. Then an $x \in [T] \setminus Y(\sigma)$ can be constructed by induction.\\
\\
Now assume that $A= \bigcap_{i<\kappa}D_{i+1}$ where $D_{i+1}$ are open dense and decreasing. We inductively construct a perfect tree $T \subset 2^{<\kappa}$ such that no branches die out and $[T] \subset A$:
\begin{itemize}
\item Set $T_0:=\{t_{\langle \, \rangle}\}$ where $t_{\langle \, \rangle}:= \langle \,\rangle$.
\item If $i=i'+1$ assume inductively that $T_{i'}=\{t_\eta \colon \eta \in 2^{i'}\}$ has already been defined and for every $t_\eta \in T_{i'}$ it holds that $[t_\eta] \subset \bigcap_{j < i'} D_{j+1}$. For every $t_\eta \in T_{i'}$ find $t_\eta' \triangleright t_\eta$ such that $[t_\eta'] \subset D_i$. Set $t_{\eta^\frown \langle i \rangle}:=t_\eta'^\frown \langle i \rangle$ and $T_i:=\{t_{\eta'} \colon \eta' \in 2^i\}$.
\item If $\gamma$ is a limit and $\eta \in 2^\gamma$ define $t_\eta:= \bigcup_{\eta' \triangleleft \, \eta} t_{\eta'}$ and set $T_\gamma:=\{t_\eta \colon \eta \in 2^\gamma\}$. Then we can deduce that $[t_\eta] \subset \bigcap_{j< \gamma} D_{j+1}$ for every $t_\eta \in T_\gamma$.
\end{itemize}
It follows from the construction that (the downward closure of) $T:= \bigcup_{i < \kappa} T_i$ is a perfect tree and that $[T] \subset A$.
\end{proof}

Conversely, the following fact holds true:
\begin{fact}
For every $f \in \kappa$ strictly increasing there exists a comeager $A \subset 2^\kappa$ such that $A \in \mathcal{I}(f)$.

\end{fact}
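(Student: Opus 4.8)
The plan is to produce a single $\sigma\in\mathcal{S}(f)$ for which $Y(\sigma)$ is already comeager, and then set $A:=Y(\sigma)$; since $\sigma\in\mathcal{S}(f)$ this $A$ is $f$-small, i.e. $A\in\mathcal{I}(f)$. The starting observation is that $Y(\sigma)=\bigcap_{i<\kappa}U_i$ where $U_i:=\bigcup_{j\ge i}[\sigma(j)]$, and that $(U_i)_{i<\kappa}$ is a \emph{decreasing} sequence of open sets (as $i$ grows we union over fewer $j$). Hence it suffices to arrange that every $U_i$ is dense: then $Y(\sigma)$ is a decreasing intersection of $\kappa$ many dense open sets, which is exactly the form of comeagerness used in Lemma \ref{L1}. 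So the two requirements on $\sigma$ are (i) each $U_i$ is dense, and (ii) $f\ll g_\sigma$.

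To meet (i) I would use a bookkeeping function. Since $\kappa$ is inaccessible (in particular a strong limit), $|2^{<\kappa}|=\kappa$, so I can fix a surjection $\pi\colon\kappa\to 2^{<\kappa}$ such that $\{i<\kappa\colon \pi(i)=s\}$ is cofinal in $\kappa$ for every $s\in 2^{<\kappa}$ (e.g. compose an enumeration of $2^{<\kappa}$ with the first coordinate of a Gödel pairing $\kappa\cong\kappa\times\kappa$). I then let $\sigma(i)$ be the extension of $\pi(i)$ by zeros up to length $\ell(i):=\max\{\text{dom}(\pi(i)),\,f(i^{i})\}$, so that $\sigma(i)\supseteq\pi(i)$, $\sigma(i)\in 2^{<\kappa}$, and $g_\sigma(i)=\ell(i)$. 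Density of $U_i$ is now immediate: given a basic open set $[s]$, choose $j\ge i$ with $\pi(j)=s$; then $[\sigma(j)]\subseteq[\pi(j)]=[s]$ is a nonempty subset of $[s]$ contained in $U_i$, so $U_i\cap[s]\neq\emptyset$.

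For (ii) I would verify $f\ll g_\sigma$ directly from the definition. Fix $\alpha<\kappa$. For every $i\ge\max\{\alpha,2\}$, ordinal exponentiation with base $i\ge 2$ is monotone in the exponent, so $\alpha\le i$ gives $i^\alpha\le i^{i}$; since $f$ is strictly increasing this yields $g_\sigma(i)=\ell(i)\ge f(i^{i})\ge f(i^\alpha)$, which is the tail condition for this $\alpha$. As $\alpha$ was arbitrary, $f\ll g_\sigma$, i.e. $\sigma\in\mathcal{S}(f)$. (One also notes here that $i^{i}<\kappa$, again because $\kappa$ is inaccessible, so that $f(i^{i})$ is defined and $\ell(i)<\kappa$.) Combining (i) and (ii), $A:=Y(\sigma)$ is comeager and lies in $\mathcal{I}(f)$. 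I expect the only genuinely delicate point to be this interaction of the $\ll$-order with ordinal arithmetic: the single exponent $i^{i}$ must dominate $i^\alpha$ for all $\alpha$ simultaneously on a tail, which is precisely what monotonicity of ordinal exponentiation provides; the existence of the cofinal bookkeeping $\pi$ and the bound $|2^{<\kappa}|=\kappa$ are routine consequences of inaccessibility.
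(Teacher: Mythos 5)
Your argument is correct: the bookkeeping surjection with cofinal fibers makes every tail $U_i=\bigcup_{j\ge i}[\sigma(j)]$ dense open, and padding $\sigma(i)$ to length at least $f(i^i)$ gives $f\ll g_\sigma$ since $i^\alpha\le i^i$ on a tail for each fixed $\alpha$, so $A:=Y(\sigma)$ is a comeager member of $\mathcal{I}(f)$. The paper states this Fact without proof (it is implicitly used when choosing $\tau_0\in\mathcal{S}(f_\alpha)$ with $Y(\tau_0)$ comeager in Lemma \ref{L2}), and your construction is exactly the intended one.
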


\begin{lemma} \label{L2}
Assume $CH$, i.e. $\vert 2^\kappa \vert = \kappa^+$, and let $(f_\alpha)_{\alpha < \kappa^+}$ be a $\kappa$-scale such that $f_\alpha$ is strictly increasing. Then there exists a matrix $(A_\alpha ^\beta)_{\alpha <\kappa^+} ^{\beta < \kappa^+}$ with the following properties:
\begin{itemize}
\item $\forall \alpha, \beta <\kappa^+ \colon A_\alpha^\beta \subseteq 2^\kappa \, \text{is comeager and}  \, f_\alpha\text{-small}$.
\item $\forall \alpha, \beta, \beta' <\kappa^+ \colon \beta \leq \beta' \Rightarrow A_\alpha^\beta \subseteq A_\alpha^{\beta'}$.
\item $\forall \alpha < \kappa^+ \,\, \forall f_\alpha \text{-small} \, B \subseteq 2^\kappa \,\, \exists \beta <\kappa^+ \colon B \subseteq A_\alpha^\beta$.
\item $\forall \alpha < \kappa^+ \,\, \forall f_\alpha \text{-small} \, B \subseteq 2^\kappa \colon \alpha > 0 \Rightarrow \big ( \bigcap_{\gamma< \alpha} A_\gamma^0 \big ) \setminus B \neq \emptyset$. This means that for every $\alpha < \kappa^+$ the set $\bigcap_{\gamma< \alpha} A_\gamma^0$ is not $f_\alpha$-small.
\end{itemize}
\end{lemma}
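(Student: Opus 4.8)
The plan is to separate the four requirements into an \emph{easy} part (the first three, which concern each row individually) and a \emph{hard} part (the fourth, which couples the rows through their $0$-th column). First I record two facts that make the bookkeeping possible. Since $\kappa$ is inaccessible, $|2^{<\kappa}|=\kappa$, hence $|(2^{<\kappa})^\kappa|=2^\kappa=\kappa^+$ under $CH$; in particular, for each $\alpha$ the family of \emph{basic} $f_\alpha$-small sets $\{Y(\sigma):\sigma\in\mathcal{S}(f_\alpha)\}$ has size at most $\kappa^+$ and is cofinal (under $\subseteq$) among all $f_\alpha$-small sets, directly by the definition of $f$-smallness. Moreover, combining the Fact that a comeager $f_\alpha$-small set exists with the $\kappa$-closure of $\mathcal{I}(f_\alpha)$, every $f_\alpha$-small set is contained in a comeager $f_\alpha$-small one: simply union it with a fixed comeager member of $\mathcal{I}(f_\alpha)$.

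Granting a suitable $0$-th column, the first three properties are routine. Enumerate the basic $f_\alpha$-small sets as $(B_\alpha^\beta)_{\beta<\kappa^+}$ and put $A_\alpha^\beta:=A_\alpha^0\cup\bigcup_{\beta'\le\beta}B_\alpha^{\beta'}$. Each such set is $f_\alpha$-small by $\kappa$-closure (a union of at most $\kappa$ members of $\mathcal{I}(f_\alpha)$), comeager because it contains the comeager set $A_\alpha^0$, and visibly increasing in $\beta$; cofinality holds since any $f_\alpha$-small $B$ lies below some $Y(\sigma)=B_\alpha^{\beta'}\subseteq A_\alpha^{\beta'}$. Enlarging the rows in this way never touches column $0$, so it cannot interfere with the fourth property.

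The substance is therefore the construction of the column $(A_\gamma^0)_{\gamma<\kappa^+}$. I would build it by recursion on $\gamma$ together with a $\subseteq$-decreasing sequence of perfect trees $T_\gamma\subseteq 2^{<\kappa}$, maintaining the invariant that $[T_\gamma]$ is \emph{not} $f_\gamma$-small, which by Lemma \ref{L1} is guaranteed as soon as $f_\gamma$ dominates the domains of the splitting levels of $T_\gamma$ (``slow'' splitting). At a successor I use the gap $f_\gamma\ll f_{\gamma+1}$ of the scale: I thin $T_\gamma$ to a perfect $T_{\gamma+1}$ whose $\alpha$-th splitting level sits at some domain $h(\alpha)$ chosen with $f_\gamma\ll h$ but $h(\alpha)<f_{\gamma+1}(\alpha)$. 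The upper bound keeps $[T_{\gamma+1}]$ non-$f_{\gamma+1}$-small by Lemma \ref{L1}, while $f_\gamma\ll h$ makes $[T_{\gamma+1}]$ genuinely $f_\gamma$-\emph{small}; this last point needs a short complement to Lemma \ref{L1}, namely that a perfect tree splitting fast enough relative to $f$ can be covered by a single $Y(\tau)$ with $\tau\in\mathcal{S}(f)$, which is where inaccessibility ($2^{|\alpha|}<\kappa$) enters. I then let $A_\gamma^0$ be a comeager $f_\gamma$-small set containing $[T_{\gamma+1}]$, obtained as $Y(\tau)\cup C_\gamma$ for a fixed comeager $C_\gamma\in\mathcal{I}(f_\gamma)$. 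Since the branch sets decrease, $[T_{\gamma+1}]\supseteq[T_\alpha]$ whenever $\gamma+1\le\alpha$, so $\bigcap_{\gamma<\alpha}A_\gamma^0\supseteq\bigcap_{\gamma<\alpha}[T_{\gamma+1}]\supseteq[T_\alpha]$; as $[T_\alpha]$ is non-$f_\alpha$-small it cannot be covered by any $f_\alpha$-small $B$, which is exactly the fourth property.

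The main obstacle is the limit step of this recursion. For $\lambda<\kappa^+$ we have $\mathrm{cf}(\lambda)\le\kappa$, so fixing a cofinal sequence $\langle\gamma_\xi:\xi<\mathrm{cf}(\lambda)\rangle$ I must fuse the decreasing trees $(T_{\gamma_\xi})$ into a single perfect $T_\lambda$ with $[T_\lambda]\subseteq\bigcap_\xi[T_{\gamma_\xi}]$ that still splits slowly relative to $f_\lambda$, so that the invariant survives. Here full $\kappa$-splitting must be preserved through up to $\kappa$ refinements while each branch is forced into every $T_{\gamma_\xi}$; the room to do this comes precisely from $f_{\gamma_\xi}\ll f_\lambda$ for all $\xi<\mathrm{cf}(\lambda)$, since entering $T_{\gamma_\xi}$ costs only extensions up to domains of order $f_{\gamma_\xi}$, negligible against $f_\lambda(\alpha)$. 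The diagonalisation and the ordinal-arithmetic bookkeeping needed to keep the $\alpha$-th splitting level of $T_\lambda$ below $f_\lambda(\alpha)$ are exactly of the type carried out in the proof that $\mathcal{I}(f)$ is $\kappa$-closed (the functions $c,d,a,b,e$ and the quantities $M_\sigma^\alpha$, $i^\alpha$), and reproducing that argument carefully in the present ``slow-splitting'' setting is the technical heart of the lemma.
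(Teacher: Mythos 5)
Your reduction of the first three properties to the choice of the column $(A_\gamma^0)_{\gamma<\kappa^+}$ is correct and is essentially what the paper does (it enumerates $\mathcal{S}(f_\alpha)$ as $(\sigma_\beta)_{\beta<\kappa^+}$ and takes $\subseteq$-increasing unions inside $\mathcal{I}(f_\alpha)$ using $\kappa$-closure). The problem is the fourth property, where you replace a short observation by a $\kappa^+$-length fusion of perfect trees that you do not carry out. The paper's route is: every $A_\gamma^0$ is chosen comeager, $\alpha<\kappa^+$ has cardinality at most $\kappa$, and an intersection of at most $\kappa$ comeager subsets of $2^\kappa$ is again comeager; so Lemma \ref{L1} applies directly to $\bigcap_{\gamma<\alpha}A_\gamma^0$ and produces an $f$ for which this set is not $f$-small. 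Since $f\ll g$ implies $\mathcal{S}(g)\subseteq\mathcal{S}(f)$ and hence $\mathcal{I}(g)\subseteq\mathcal{I}(f)$, non-$f$-smallness is inherited by every $\ll$-larger function, and the scale lets one take $f=f_\alpha$. No perfect set has to be threaded through the intersection at any stage; the trees appear only once, inside the proof of Lemma \ref{L1}.

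The limit step you defer is not mere bookkeeping; it is where your construction breaks, for two concrete reasons. First, for $[T_\lambda]\subseteq\bigcap_{\xi<\mathrm{cf}(\lambda)}[T_{\gamma_\xi}]$ to contain a perfect set you need a fusion invariant (each later tree agreeing with each earlier one on a prescribed, growing family of splitting levels) maintained across the entire $\kappa^+$-length recursion, not invoked only at the limit; a perfect subtree of $2^{<\kappa}$ has only $\kappa$ splitting levels available to freeze, so a recursion of length $\kappa^+$ in which every successor step re-thins the tree requires bookkeeping that your outline never sets up, and without it the intersection can easily fail to be perfect. Second, even granting a perfect $T_\lambda$ inside the intersection, its $\alpha$-th splitting level lies above the supremum over $\xi$ of the $\alpha$-th splitting levels of the $T_{\gamma_\xi}$; each of these is dominated by $f_\lambda$ only eventually, with a threshold depending on $\xi$, and the scale property supplies no diagonal or uniform domination over $\mathrm{cf}(\lambda)=\kappa$ many indices. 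Hence the supremum need not be dominated by $f_\lambda$ at all, and the invariant ``$[T_\lambda]$ is not $f_\lambda$-small'' can be lost exactly where you need it. (Your auxiliary claim that a sparsely splitting perfect tree is covered by a single $Y(\tau)$ with $\tau\in\mathcal{S}(f)$ is true and provable by the $c,d,a,b,e$-style bookkeeping, but it is the least of the difficulties.) I would drop the trees for the fourth property and argue via comeagerness of the intersection as above.
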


\begin{proof}
We shall construct $A_\alpha^\beta$ by a lexicographic induction on $(\alpha, \beta)\in \kappa^+ \times \kappa^+$: Assume that $(A_\gamma^\beta)_{\gamma < \alpha}^{\beta < \kappa^+}$ have already been defined. Since $\bigcap_{\gamma< \alpha} A_\gamma^0$ is comeager, there exists $f$ such that $\bigcap_{\gamma< \alpha} A_\gamma^0$ is not $f$-small by Lemma \ref{L1}.  W.l.o.g. let $f=f_\alpha$. Choose some $\tau_0 \in \mathcal{S}(f_\alpha)$ such that $Y(\tau_0)$ is comeager and set $A_\alpha^0:= Y(\tau_0)$. Next enumerate $\mathcal{S}(f_\alpha)$ as $(\sigma_\beta)_{\beta < \kappa^+}$ such that $\sigma_0 =\tau_0$. Finally choose $\tau_\beta \in \mathcal{S}(f_\alpha)$ inductively such that $\bigcup_{\gamma < \beta} Y(\tau_\gamma) \cup Y(\sigma_\beta) \subseteq Y(\tau_\beta)$. This is possible since $\mathcal{I}(f_\alpha)$ is $\kappa$-closed. Set $A_\alpha^\beta:=Y(\tau_\beta)$.
\end{proof}

\begin{fact}
If $(f_\alpha)_{\alpha < \kappa^+}$ and $(A_\alpha ^\beta)_{\alpha <\kappa^+} ^{\beta < \kappa^+}$ are as above, then for every $g \in \big (\kappa^+ \big )^{\kappa^+}$ it holds that $\bigcap_{\alpha < \kappa^+} A_\alpha^{g(\alpha)} \in \mathcal{SN}$.
\end{fact}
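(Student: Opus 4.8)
\emph{Strategy and reduction.} My plan is to use the Fact that $\mathcal{SN} = \bigcap_{f \in \kappa^\kappa} \mathcal{I}(f)$: to prove $X := \bigcap_{\alpha < \kappa^+} A_\alpha^{g(\alpha)} \in \mathcal{SN}$ it suffices to show that $X \in \mathcal{I}(f)$, i.e. that $X$ is $f$-small, for every strictly increasing $f \in \kappa^\kappa$. So I would fix such an $f$ and aim to produce a single $\sigma \in \mathcal{S}(f)$ with $X \subseteq Y(\sigma)$. The key idea is that the scale lets one replace the arbitrary $f$ by one of the rows of the matrix.

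Concretely, since $(f_\alpha)_{\alpha < \kappa^+}$ is a $\kappa$-scale and hence $\ll$-cofinal, I would first pick $\alpha^* < \kappa^+$ with $f \ll f_{\alpha^*}$. By the first property of the matrix in Lemma \ref{L2}, the set $A_{\alpha^*}^{g(\alpha^*)}$ is $f_{\alpha^*}$-small, so there is some $\sigma \in \mathcal{S}(f_{\alpha^*})$ with $A_{\alpha^*}^{g(\alpha^*)} \subseteq Y(\sigma)$; in particular $f_{\alpha^*} \ll g_\sigma$. It then remains to transfer this witness from $f_{\alpha^*}$ down to $f$: chaining $f \ll f_{\alpha^*} \ll g_\sigma$ and invoking transitivity of $\ll$ yields $f \ll g_\sigma$, so $\sigma \in \mathcal{S}(f)$. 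Since $\alpha^*$ is one of the coordinates of the intersection, $X \subseteq A_{\alpha^*}^{g(\alpha^*)} \subseteq Y(\sigma)$, whence $X$ is $f$-small. As $f$ was an arbitrary strictly increasing function, $X \in \bigcap_{f \in \kappa^\kappa} \mathcal{I}(f) = \mathcal{SN}$.

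The only real content, and the step I would treat most carefully, is the transitivity of $\ll$, which I would verify directly: given $f \ll g \ll h$ and a fixed exponent $\alpha \geq 1$, apply $g \ll h$ at exponent $\alpha$ to get $h(i) \geq g(i^\alpha)$ for all large $i$, and apply $f \ll g$ at exponent $1$ to get $g(j) \geq f(j)$ for all large $j$; since $i^\alpha \geq i$ in ordinal arithmetic one may substitute $j = i^\alpha$ and conclude $h(i) \geq g(i^\alpha) \geq f(i^\alpha)$ for all large $i$ (the case $\alpha = 0$ follows from the case $\alpha = 1$ using that $f$ is strictly increasing). Everything else in the argument is routine.

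It is worth flagging that this direction of the analysis uses \emph{only} the $f_\alpha$-smallness of the matrix entries, together with the cofinality of the scale and transitivity of $\ll$. The comeagerness of the $A_\alpha^\beta$, their $\subseteq$-monotonicity in $\beta$, the cofinality clause, and the diagonal non-smallness clause of Lemma \ref{L2} play no role here; those properties are instead what one needs for the complementary statement that the diagonal intersections $\bigcap_{\alpha < \kappa^+} A_\alpha^{g(\alpha)}$ are cofinal in $\mathcal{SN}$.
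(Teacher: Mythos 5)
Your argument is correct; the paper states this as a Fact without proof, and your reduction via $\mathcal{SN}=\bigcap_f\mathcal{I}(f)$, the $\ll$-cofinality of the scale, and the transitivity of $\ll$ is exactly the intended (standard) argument. The only implicit step worth recording is why a $\leq^*$-dominating family of strictly increasing functions is $\ll$-cofinal: given strictly increasing $f$, any $g$ eventually dominating $i \mapsto f(i^i)$ satisfies $f \ll g$, since $i^i \geq i^\alpha$ once $i \geq \max(\alpha,2)$.
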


We are ready to prove the first theorem:

\begin{theorem} \label{T1}
Assume $CH$. Then $\text{cof}(\mathcal{SN})= \mathfrak{d}_{\kappa^+}$.
\end{theorem}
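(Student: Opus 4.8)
The plan is to establish the two inequalities $\text{cof}(\mathcal{SN}) \le \mathfrak{d}_{\kappa^+}$ and $\text{cof}(\mathcal{SN}) \ge \mathfrak{d}_{\kappa^+}$ by exhibiting a tight correspondence between the partial order $(\mathcal{SN},\subseteq)$ and $\big((\kappa^+)^{\kappa^+},\le^*\big)$, where $g \le^* h$ means that $g(\alpha)\le h(\alpha)$ for all but boundedly many $\alpha<\kappa^+$. The correspondence is carried by two monotone maps attached to the matrix of Lemma \ref{L2}: to $g\in(\kappa^+)^{\kappa^+}$ I assign $X_g:=\bigcap_{\alpha<\kappa^+}A_\alpha^{g(\alpha)}$, which lies in $\mathcal{SN}$ by the Fact preceding the theorem; and to $Y\in\mathcal{SN}$ I assign $h_Y$ with $h_Y(\alpha):=\min\{\beta : Y\subseteq A_\alpha^\beta\}$. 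The latter is well defined because $Y\in\mathcal{SN}=\bigcap_f\mathcal{I}(f)$ is in particular $f_\alpha$-small for every $\alpha$, so the third clause of Lemma \ref{L2} applies; note that $Y\subseteq X_{h_Y}$ by construction and that $\beta\mapsto A_\alpha^\beta$ is increasing by the second clause.

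For the lower bound, let $\mathcal{G}\subseteq\mathcal{SN}$ be cofinal and I will show that $\{h_Y : Y\in\mathcal{G}\}$ is $\le^*$-dominating, forcing $|\mathcal{G}|\ge\mathfrak{d}_{\kappa^+}$. Fix an arbitrary $g\in(\kappa^+)^{\kappa^+}$. For each $\alpha>0$ the fourth clause of Lemma \ref{L2} guarantees that $\bigcap_{\gamma<\alpha}A_\gamma^0$ is not $f_\alpha$-small, while $A_\alpha^{g(\alpha)}$ is, so I may choose a point $x_\alpha\in\big(\bigcap_{\gamma<\alpha}A_\gamma^0\big)\setminus A_\alpha^{g(\alpha)}$. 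Put $X:=\{x_\alpha : 0<\alpha<\kappa^+\}$. I first verify $X\in\mathcal{SN}$: for each $\delta$ the tail $\{x_\alpha : \alpha>\delta\}$ is contained in the $f_\delta$-small set $A_\delta^0$, while the remaining $\le\kappa$ points form a set in $\mathcal{SN}$ since $\mathcal{SN}$ is $\kappa$-closed and contains singletons; hence $X$ is $f_\delta$-small for every $\delta$, i.e.\ $X\in\mathcal{SN}$. Now choose $Y\in\mathcal{G}$ with $X\subseteq Y$. For every $\alpha>0$ we have $x_\alpha\in Y\subseteq A_\alpha^{h_Y(\alpha)}$ but $x_\alpha\notin A_\alpha^{g(\alpha)}$, and by monotonicity this forces $h_Y(\alpha)>g(\alpha)$. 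Thus $g\le^* h_Y$, so the family dominates.

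For the upper bound, fix a $\le^*$-dominating family $D\subseteq(\kappa^+)^{\kappa^+}$ with $|D|=\mathfrak{d}_{\kappa^+}$, and let $\mathcal{F}$ consist of all $X_{h^*}$ where $h^*$ agrees with some $h\in D$ off a bounded subset of $\kappa^+$; every member of $\mathcal{F}$ lies in $\mathcal{SN}$. Given $Y\in\mathcal{SN}$, pick $h\in D$ with $h_Y\le^* h$ and let $\alpha_0<\kappa^+$ bound the exceptional set (possible as $\kappa^+$ is regular); define $h^*(\alpha):=h(\alpha)$ for $\alpha\ge\alpha_0$ and $h^*(\alpha):=h_Y(\alpha)$ for $\alpha<\alpha_0$. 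Then $h_Y\le h^*$ pointwise and $h^*$ agrees with $h$ off $\alpha_0$, so $Y\subseteq X_{h_Y}\subseteq X_{h^*}\in\mathcal{F}$, whence $\mathcal{F}$ is cofinal. Its cardinality is at most $|D|\cdot\sum_{\alpha_0<\kappa^+}(\kappa^+)^{|\alpha_0|}$, and under $CH$ one computes $(\kappa^+)^{\kappa}\le(2^\kappa)^\kappa=\kappa^+$, so this bound equals $|D|\cdot\kappa^+=\mathfrak{d}_{\kappa^+}$ (using $\mathfrak{d}_{\kappa^+}\ge\kappa^+$). Combining the two inequalities gives $\text{cof}(\mathcal{SN})=\mathfrak{d}_{\kappa^+}$.

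The technical heart of the argument is reconciling \emph{everywhere} domination with \emph{mod-bounded} domination. In the lower bound the point-choices $x_\alpha$ are engineered so that membership $x_\alpha\in Y$ pins down $h_Y(\alpha)$ strictly above $g(\alpha)$ for \emph{every} $\alpha>0$, which is what makes the clause-4 non-smallness property the decisive ingredient. In the upper bound, by contrast, $\le^*$-domination only controls $h_Y$ on a tail, so I expect the main obstacle to be the need to correct the dominating function on the bounded initial segment below $\alpha_0$; this is precisely why $\mathcal{F}$ must be closed under bounded modifications, and why the $CH$-driven cardinal arithmetic $(\kappa^+)^{\kappa}=\kappa^+$ is essential to keep $|\mathcal{F}|$ within $\mathfrak{d}_{\kappa^+}$.
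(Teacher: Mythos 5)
Your proof is correct and follows essentially the same route as the paper: both directions hinge on the matrix of Lemma \ref{L2}, the upper bound uses exactly the paper's trick of absorbing bounded modifications via $(\kappa^+)^{\kappa}=\kappa^+$ under $CH$ (the paper packages this as a \emph{pointwise} dominating family of size $\mathfrak{d}_{\kappa^+}$ up front), and the lower bound is the paper's diagonal anti-domination argument stated directly rather than by contradiction. The one substantive divergence is how the diagonal set is shown to be in $\mathcal{SN}$: you observe that each tail $\{x_\alpha:\alpha>\delta\}$ lies in the single $f_\delta$-small set $A_\delta^0$ (with the initial segment handled by $\kappa$-closure, and implicitly using that the $f_\delta$ form a dominating family so that $\bigcap_\delta\mathcal{I}(f_\delta)=\mathcal{SN}$), whereas the paper instead inductively builds an auxiliary $h'\geq h$ so that all chosen points sit inside $\bigcap_\gamma A_\gamma^{h'(\gamma)}$ --- your version is a mild simplification and both are sound.
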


\begin{proof}
First we prove $\text{cof}(\mathcal{SN})\leq \mathfrak{d}_{\kappa^+}$. Let $\mathcal{D}$ be a pointwise dominating family in $\big ({\kappa^+} \big)^{\kappa^+}$ of size $\mathfrak{d}_{\kappa^+}$. This family exists, because there is an eventually dominating family of size $ \mathfrak{d}_{\kappa^+}$ and $\big ({\kappa^+} \big )^{\kappa} = \kappa^+$, since we have $CH$. Let $(f_\alpha)_{\alpha< \kappa^+}$ be a $\kappa$-scale (which exists by $CH$) and $(A_\alpha^\beta)_{\alpha< \kappa^+}^{\beta<\kappa^+}$ the matrix from Lemma \ref{L2}. Define $\mathcal{B}:=\{ X \subseteq 2^\kappa \colon \exists g \in \mathcal{D} \,\, X=\bigcap_{\alpha<\kappa^+} A_\alpha^{g(\alpha)} \}$. $\mathcal{B}$ has size $\leq\mathfrak{d}_{\kappa^+}$. We must show that $\mathcal{B}$ is cofinal in $\mathcal{SN}$. \\
First, it obviously holds that $\mathcal{B} \subseteq \mathcal{SN}$. Now let $Y \in \mathcal{SN}$. Then there exists an $h \in \big ({\kappa^+} \big)^{\kappa^+}$ such that $Y \subseteq \bigcap_{\alpha < \kappa^+} A_\alpha^{h(\alpha)}$. Hence, there exists $g\in \mathcal{D}$ such that $g$ dominates $h$ pointwise. So $Y \subseteq \bigcap_{\alpha < \kappa^+} A_\alpha^{g(\alpha)} \in  \mathcal{B}$.\\
\\
Next, let us show that $\text{cof}(\mathcal{SN})\geq \mathfrak{d}_{\kappa^+}$. Assume that $\text{cof}(\mathcal{SN})<\mathfrak{d}_{\kappa^+}$. So there exists a $\mathcal{C}$ cofinal in $\mathcal{SN}$ of size ${<} \,\mathfrak{d}_{\kappa^+}$. Hence, for every $X \in \mathcal{C}$ there exists $g_X \in \mathcal{D}$ such that $X \subseteq \bigcap_{\alpha < \kappa^+} A_\alpha^{g_X(\alpha)}$. Let us define $\mathcal{D}':=\{g_X \colon X \in \mathcal{C}\} \subseteq \mathcal{D}$. Then $\vert \mathcal{D}' \vert < \mathfrak{d}_{\kappa^+}$. Hence, there exists $h$ such that no $g \in \mathcal{D}'$ dominates it.\\
Inductively we will now construct $h'\in \big ({\kappa^+} \big)^{\kappa^+}$ and $\{x_\gamma \colon \gamma < \kappa^+\}$ such that:
\begin{itemize}
\item $\forall \alpha < \kappa^+ \colon h(\alpha) \leq h'(\alpha)$.
\item $\forall \alpha <\kappa^+ \colon \{x_\gamma \colon \gamma < \alpha\} \subseteq  A_\alpha^{h'(\alpha)}$.
\item $x_\alpha \in \big (\bigcap_{\gamma \leq \alpha} A_\gamma^{h'(\gamma)} \big ) \setminus A_\alpha^{h(\alpha)}$.
\end{itemize}
Assume that $h' \restriction \alpha$ and $\{x_\gamma \colon \gamma < \alpha\}$ have already been defined. Simply by choosing $h'(\alpha)$ large enough we can ensure that $\{x_\gamma \colon \gamma < \alpha\} \subseteq A_\alpha^{h'(\alpha)}$. Also since $\bigcap_{\gamma < \alpha} A_\gamma^0 \subseteq \bigcap_{\gamma < \alpha} A_\gamma^{h'(\gamma)}$, it follows by Lemma \ref{L2} that $\big (\bigcap_{\gamma < \alpha} A_\gamma^{h'(\gamma)} \big ) \setminus  A_\alpha^{h(\alpha)} \neq \emptyset$. Again, if we choose $h'(\alpha)$ large enough, then also $\big (\bigcap_{\gamma \leq \alpha} A_\gamma^{h'(\gamma)} \big )\setminus  A_\alpha^{h(\alpha)} \neq \emptyset$. So choose $h'(\alpha)$ large enough $\geq h(\alpha)$, and pick $x_\alpha \in \big (\bigcap_{\gamma \leq \alpha} A_\gamma^{h'(\gamma)} \big) \setminus  A_\alpha^{h(\alpha)}$.\\
First, $\{x_\gamma \colon \gamma < \kappa^+\} \in \mathcal{SN}$, because $\{x_\gamma \colon \gamma < \kappa^+\} \subseteq \bigcap_{\gamma < \kappa^+} A_\gamma^{h'(\gamma)}$. Finally we show that no $X \in \mathcal{C}$ covers $\{x_\gamma \colon \gamma < \kappa^+\}$. We will show that for every $g_X \in  \mathcal{D}'$ it holds that $\{x_\gamma \colon \gamma < \kappa^+\} \nsubseteq \bigcap_{\alpha < \kappa^+} A_\alpha^{g_X(\alpha)}$. Let $g_X \in  \mathcal{D}'$ be arbitrary. Find $\alpha < \kappa$ such that $g(\alpha) \leq h(\alpha)$. But then $x_\alpha \notin A_\alpha^{g_X(\alpha)}$.
\end{proof}

We can generalize Theorem \ref{T1} as follows:

\begin{theorem} \label{T2}
Assume that $\text{add}(\mathcal{M}_\kappa)=\mathfrak{d}_\kappa$ and there exists a dominating family $\{f_\alpha \in \kappa^\kappa \colon \alpha < \mathfrak{d}_\kappa\}$ such that $\text{add}(\mathcal{I}(f_\alpha))=\text{cof}(\mathcal{I}(f_\alpha)) =\mathfrak{d}_\kappa$. Then $\text{cof}(\mathcal{SN})=\mathfrak{d}_{\mathfrak{d}_\kappa}$.
\end{theorem}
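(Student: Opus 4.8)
The plan is to generalize the proof of Theorem \ref{T1}, replacing the role of $CH$ by the two hypotheses $\text{add}(\mathcal{M}_\kappa)=\mathfrak{d}_\kappa$ and the existence of a dominating family of $f_\alpha$'s for which $\mathcal{I}(f_\alpha)$ has additivity and cofinality both equal to $\mathfrak{d}_\kappa$. Under $CH$ we had $\mathfrak{d}_\kappa = \kappa^+$, the scale had length $\kappa^+$, and each $\mathcal{I}(f_\alpha)$ was automatically generated by a $\subseteq$-increasing $\kappa^+$-chain of comeager small sets; this is precisely the structure now being \emph{hypothesized} directly. So the first task is to reconstruct an analogue of the matrix from Lemma \ref{L2}, but indexed by $\mathfrak{d}_\kappa \times \mathfrak{d}_\kappa$ instead of $\kappa^+ \times \kappa^+$.

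\medskip

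First I would fix the dominating family $\{f_\alpha \colon \alpha < \mathfrak{d}_\kappa\}$ from the hypothesis, arranged (by thinning/reindexing) so that it is $\ll$-increasing and cofinal, which is possible since it is dominating. For each $\alpha$, since $\text{cof}(\mathcal{I}(f_\alpha))=\mathfrak{d}_\kappa$, fix a cofinal family in $\mathcal{I}(f_\alpha)$ of size $\mathfrak{d}_\kappa$; since $\text{add}(\mathcal{I}(f_\alpha))=\mathfrak{d}_\kappa$ and the ideal is $\kappa$-closed, I can transform this into a $\subseteq$-increasing cofinal chain $(A_\alpha^\beta)_{\beta<\mathfrak{d}_\kappa}$ of $f_\alpha$-small sets, each of which I arrange to be comeager (using the Fact that $\mathcal{I}(f_\alpha)$ contains comeager sets, and closing the chain under unions with comeager small sets). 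The key structural point to re-establish is the fourth bullet of Lemma \ref{L2}: that $\bigcap_{\gamma<\alpha} A_\gamma^0$ is not $f_\alpha$-small. Here the hypothesis $\text{add}(\mathcal{M}_\kappa)=\mathfrak{d}_\kappa$ does the work formerly done by $CH$ — the intersection of fewer than $\mathfrak{d}_\kappa$ comeager sets is comeager (as $\alpha<\mathfrak{d}_\kappa=\text{add}(\mathcal{M}_\kappa)$), hence by Lemma \ref{L1} it is not $f$-small for some $f$, and cofinality of the scale lets me take $f=f_\alpha$.

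\medskip

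With this matrix in hand, the two inequalities go exactly as in Theorem \ref{T1}, now with $\mathfrak{d}_{\mathfrak{d}_\kappa}$ in place of $\mathfrak{d}_{\kappa^+}$. For $\text{cof}(\mathcal{SN})\leq \mathfrak{d}_{\mathfrak{d}_\kappa}$, I take a pointwise dominating family $\mathcal{D}$ in $(\mathfrak{d}_\kappa)^{\mathfrak{d}_\kappa}$ of size $\mathfrak{d}_{\mathfrak{d}_\kappa}$ (the passage from eventual to pointwise dominating requires $(\mathfrak{d}_\kappa)^{<\mathfrak{d}_\kappa}=\mathfrak{d}_\kappa$, which I must verify or assume follows from the standing cardinal-arithmetic setup) and let $\mathcal{B}$ consist of the sets $\bigcap_{\alpha<\mathfrak{d}_\kappa} A_\alpha^{g(\alpha)}$ for $g\in\mathcal{D}$; cofinality of $\mathcal{B}$ in $\mathcal{SN}$ follows because any $Y\in\mathcal{SN}$ lies in every $\mathcal{I}(f_\alpha)$ (by the Fact $\mathcal{SN}=\bigcap_f \mathcal{I}(f)$ together with cofinality of the $f_\alpha$), hence is covered by some $A_\alpha^{h(\alpha)}$ for each $\alpha$, and $g\in\mathcal{D}$ dominating $h$ finishes it. For the reverse inequality I run the same inductive diagonalization as before: given a cofinal $\mathcal{C}$ of size $<\mathfrak{d}_{\mathfrak{d}_\kappa}$ and the induced $\mathcal{D}'\subseteq\mathcal{D}$, pick $h$ undominated by $\mathcal{D}'$, and build $h'$ and points $x_\gamma$ using the non-smallness of $\bigcap_{\gamma<\alpha}A_\gamma^0$ at each stage, producing a strong-measure-zero set no member of $\mathcal{C}$ covers.

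\medskip

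\textbf{The main obstacle} I anticipate is the second paragraph: converting the \emph{flat} hypotheses $\text{add}(\mathcal{I}(f_\alpha))=\text{cof}(\mathcal{I}(f_\alpha))=\mathfrak{d}_\kappa$ into the \emph{linearly increasing, comeager} matrix that the Theorem \ref{T1} argument consumes. Under $CH$ this was free because $\kappa^+$ is regular and the ideal was trivially well-structured; here I must genuinely use that additivity equals cofinality to realize a cofinal set as a well-ordered $\subseteq$-increasing chain of length $\mathfrak{d}_\kappa$, and I must check that this can be done simultaneously keeping every link comeager and keeping the cross-row coherence needed for the fourth-bullet non-smallness property. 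The subtlety is ensuring $\mathfrak{d}_\kappa$ is regular (so that an increasing chain of that length is cofinal in a cofinal family of that size) — this should follow from $\text{add}(\mathcal{M}_\kappa)=\mathfrak{d}_\kappa$, since additivity is always regular, so I would record that observation explicitly and lean on it throughout.
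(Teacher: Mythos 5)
Your overall strategy is the paper's: observe that $\text{add}(\mathcal{M}_\kappa)\leq\mathfrak{b}_\kappa\leq\mathfrak{d}_\kappa$ forces $\mathfrak{b}_\kappa=\mathfrak{d}_\kappa$ (hence $\mathfrak{d}_\kappa$ is regular and the dominating family can be thinned to a $\kappa$-scale consisting of the given $f_\alpha$'s), rebuild the matrix of Lemma \ref{L2} with $\mathfrak{d}_\kappa$ in place of $\kappa^+$ --- using $\text{add}(\mathcal{M}_\kappa)=\mathfrak{d}_\kappa$ to keep $\bigcap_{\gamma<\alpha}A_\gamma^0$ comeager and $\text{add}(\mathcal{I}(f_\alpha))=\text{cof}(\mathcal{I}(f_\alpha))=\mathfrak{d}_\kappa$ to get increasing cofinal columns --- and then run both inequalities from Theorem \ref{T1}. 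All of that matches the intended argument, and your reconstruction of the matrix is if anything more explicit than the paper's.

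The one genuine gap is exactly the step you flagged and then deferred: the pointwise dominating family in $(\mathfrak{d}_\kappa)^{\mathfrak{d}_\kappa}$ of size $\mathfrak{d}_{\mathfrak{d}_\kappa}$. In Theorem \ref{T1} this was extracted from an eventually dominating family because $(\kappa^+)^{\kappa}=\kappa^+$ under $CH$; here the analogous requirement $(\mathfrak{d}_\kappa)^{<\mathfrak{d}_\kappa}=\mathfrak{d}_\kappa$ does \emph{not} follow from the hypotheses, and it actually fails in the models this theorem is built for: there one has $\mathfrak{d}_\kappa=\kappa^+$ while $2^\kappa\geq\kappa^{++}$, so $(\mathfrak{d}_\kappa)^{\kappa}\geq 2^\kappa>\mathfrak{d}_\kappa$. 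So this cannot be ``verified or assumed from the standing setup''; as written, the upper bound $\text{cof}(\mathcal{SN})\leq\mathfrak{d}_{\mathfrak{d}_\kappa}$ is not established. The paper's fix is to drop pointwise domination altogether: take $\mathcal{D}$ merely eventually dominating of size $\mathfrak{d}_{\mathfrak{d}_\kappa}$ and let $\mathcal{B}$ consist of the \emph{tail} intersections $\bigcap_{\alpha\geq\beta}A_\alpha^{g(\alpha)}$ for $g\in\mathcal{D}$ and $\beta<\mathfrak{d}_\kappa$. These are still in $\mathcal{SN}$ because every tail $\{f_\alpha\colon\alpha\geq\beta\}$ of the scale is still dominating (again by $\mathfrak{b}_\kappa=\mathfrak{d}_\kappa$); there are only $\mathfrak{d}_{\mathfrak{d}_\kappa}\cdot\mathfrak{d}_\kappa=\mathfrak{d}_{\mathfrak{d}_\kappa}$ of them; and cofinality of $\mathcal{B}$ now needs only that some $g\in\mathcal{D}$ eventually dominates $h$. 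With that replacement the rest of your argument goes through unchanged; in particular the lower bound is unaffected, since there one only needs a single coordinate $\alpha$ with $h(\alpha)\geq g_X(\alpha)$.
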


\begin{proof}
First note that also $\mathfrak{b}_\kappa=\mathfrak{d}_\kappa$. Therefore, for every $\beta < \mathfrak{d}_\kappa$, the family $\{f_\alpha \colon \beta \leq \alpha < \mathfrak{d}_\kappa\}$ is also dominating. So w.l.o.g. we can assume that $\{f_\alpha \colon \alpha < \mathfrak{d}_\kappa\}$ is a $\kappa$-scale. Next, construct a matrix $(A_\alpha ^\beta)_{\alpha <\mathfrak{d}_\kappa} ^{\beta < \mathfrak{d}_\kappa}$ similar to Lemma \ref{L2}.\\
Let $\mathcal{D}$ be dominating family in $\mathfrak{d}_\kappa^{\mathfrak{d}_\kappa}$ of size $\mathfrak{d}_{\mathfrak{d}_\kappa}$. Following the proof of Theorem \ref{T1} we define $\mathcal{B}:=\{ X \subseteq 2^\kappa \colon \exists g \in \mathcal{D} \,\, \exists \beta < \mathfrak{d}_\kappa \, \, X=\bigcap_{\alpha \geq \beta} A_\alpha^{g(\alpha)} \}$. Note that $\vert \mathcal{B} \vert\leq \mathfrak{d}_{\mathfrak{d}_\kappa}$ and $\mathcal{B} \subseteq \mathcal{SN}$, since $\{f_\alpha \colon \alpha < \mathfrak{d}_\kappa\}$ is a $\kappa$-scale. We shall show that $\mathcal{B}$ is cofinal in $\mathcal{SN}$:\\
Let $Y \in \mathcal{SN}$, therefore there exists $h \in \mathfrak{d}_\kappa^{\mathfrak{d}_\kappa}$ such that $Y \subseteq \bigcap_{\alpha <\mathfrak{d}_\kappa} A_\alpha^{h(\alpha)}$. But then there is $g \in \mathcal{D}$ and $\beta < \mathfrak{d}_\kappa$ such that $\forall \alpha \geq \beta \colon g(\alpha) \geq h(\alpha)$. Therefore, $Y \subseteq  \bigcap_{\alpha \geq \beta} A_\alpha^{g(\alpha)} \in \mathcal{B}$.\\
To prove $\mathfrak{d}_{\mathfrak{d}_\kappa} \leq \text{cof}(\mathcal{SN})$ we proceed as in the proof of Theorem \ref{T1}.
\end{proof}

\section{A Model}

In this section we want to force $\text{add}(\mathcal{M}_\kappa)=\mathfrak{d}_\kappa$, and there exists a dominating family $\{f_\alpha \colon \alpha < \mathfrak{d}_\kappa\}$ such that $\text{add}(\mathcal{I}(f_\alpha))=\text{cof}(\mathcal{I}(f_\alpha)) =\mathfrak{d}_\kappa$ for every $\alpha < \mathfrak{d}_\kappa$. Then $\text{cof}(\mathcal{SN})= \mathfrak{d}_{\mathfrak{d}_\kappa}$ holds by Theorem \ref{T2}.

\begin{definition}
For $f \in \kappa^\kappa$ strictly increasing define the forcing notion $\mathbb{O}_f$:\\
$p$ is a condition iff $p=(\sigma, \alpha, l, F)$ such that:
\begin{itemize}
\item [P1] $\sigma \in (2^{< \kappa})^{<\kappa}$, $\alpha, l \in \kappa$ and $F \subset \mathcal{S}(f)$ is infinite and of size $< \kappa$
\item [P2] $\vert F \vert \cdot l \geq \text{dom}( \sigma ) \geq l \geq \sup\{M_\tau^{3 \cdot \beta} \colon \tau \in F, \, \beta < \alpha\} \cup \vert F \vert$
\end{itemize}
If $p=(\sigma, \alpha, l, F)$ and $q=(\sigma', \alpha', l', F')$ are conditions in $\mathbb{O}_f$ we define $q \leq_{\mathbb{O}_f} p$, i.e. $q$ is stronger than $p$, iff:
\begin{itemize}
\item [Q1] $\sigma \subseteq \sigma'$, $\alpha \leq \alpha'$, $l \leq l'$ and $F \subseteq F'$
\item [Q2] $\forall \beta < \alpha \,\, \forall i \in \text{dom}(\sigma') \setminus \text{dom}(\sigma) \colon g_{\sigma'}(i) \geq f(i^\beta)$
\item [Q3] $\forall \tau \in F \,\, \forall i \in l' \setminus l \,\, \exists j \in \text{dom}(\sigma') \colon \sigma'(j)=\tau(i)$
\end{itemize}
\end{definition}

The following lemma will be crucial for many density arguments.

\begin{lemma} \label{L3}
Let $p= (\sigma, \alpha, l, F) \in \mathbb{O}_f$. Let $\eta \geq \text{dom}(\sigma)$, $\alpha' \geq \alpha$, $l' \geq l $ and $F' \supseteq F$. Then there exists an extension $q=(\sigma', \alpha', l'',F')$ such that $\text{dom}(\sigma') \geq \eta$ and $l'' \geq l'$.
\end{lemma}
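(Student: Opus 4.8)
The plan is to build the extension $q=(\sigma',\alpha',l'',F')$ by first fixing the numerical data $l''$ and the target domain $\delta:=\mathrm{dom}(\sigma')$, and only then defining $\sigma'$ on the new positions $[\mathrm{dom}(\sigma),\delta)$ so as to meet \textbf{Q2} and \textbf{Q3} simultaneously. Since $\kappa$ is inaccessible, hence regular, and $|F'|<\kappa$, $\alpha'<\kappa$, the ordinal $L_0:=\sup\{M_\tau^{3\cdot\beta}:\tau\in F',\ \beta<\alpha'\}\cup|F'|$ is $<\kappa$; so I may choose $l''<\kappa$ with $l''\geq\max(l',L_0,\eta)$, which already secures $l''\geq l'$ and the lower half of \textbf{P2} for $q$. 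The value $\delta$ will be pinned down at the end, large enough that $\delta\geq\eta$ and that every value demanded by \textbf{Q3} can be placed, yet small enough that $\delta\leq|F'|\cdot l''$.

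The observation that drives everything is that membership of $F$ in $\mathcal S(f)$ provides long entries. From \textbf{P2} for $p$ we have $l\geq M_\tau^{3\cdot\beta}$ for all $\tau\in F$ and $\beta<\alpha$; hence for every $i\geq l$, every $\tau\in F$ and every $\beta<\alpha$, $\mathrm{dom}(\tau(i))=g_\tau(i)\geq f(i^{3\cdot\beta})$. Consequently, if I place the value $\tau(i)$ at a new position $j\leq i^3$, then $g_{\sigma'}(j)=\mathrm{dom}(\tau(i))\geq f(i^{3\cdot\beta})\geq f(j^\beta)$ for every $\beta<\alpha$, because $j\leq i^3$ gives $j^\beta\leq (i^3)^\beta=i^{3\cdot\beta}$ and $f$ is strictly increasing. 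Thus any such placement automatically fulfils the \textbf{Q2} requirement at $j$. This is exactly the point where the apparent conflict between \textbf{Q2} (each new entry must be long) and \textbf{Q3} (specific, variable-length values must be inserted) dissolves, and what remains is a purely combinatorial allocation. Note also that, since $F$ is infinite, $l\geq|F|\geq\omega$, so all relevant indices $i\geq l$ are infinite and the cube estimates below carry no small-index exceptions.

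Next I would carry out the allocation. Processing the indices $i\in[l,l'')$ in increasing order, and within each $i$ the $|F|$ values $\{\tau(i):\tau\in F\}$, I assign to each pair $(\tau,i)$ a fresh new position $j_{\tau,i}$ by greedy low-packing, and set $\sigma'(j_{\tau,i}):=\tau(i)$; every remaining new position $j<\delta$ is filled with an arbitrary element of $2^{<\kappa}$ of length $\geq\sup_{\beta<\alpha}f(j^\beta)<\kappa$, which clears \textbf{Q2} there too. That the greedy positions stay below $i^3$ is an ordinal count entirely parallel to the estimates in the proof that $\mathcal I(f)$ is $\kappa$-closed: the number of pairs $(\tau,i')$ with $i'\leq i$ is at most $|F|\cdot(i+1)$, and using $|F|\leq l\leq i$ one checks $\mathrm{dom}(\sigma)+|F|\cdot(i+1)\leq i^2+i^2\cdot 2=i^2\cdot 3\leq i^3$, so every $j_{\tau,i}\leq i^3$ as required. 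Greedy packing uses at most $|F|\cdot l''$ positions, whence (with $|F|\leq|F'|$) I can take $\delta$ with $\max(\eta,l'')\leq\delta\leq|F'|\cdot l''$ and pad out to $\delta$ with dummy entries.

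Finally I would verify the axioms for $q=(\sigma',\alpha',l'',F')$: \textbf{P1} holds since $\delta,l'',\alpha'<\kappa$ and $F'$ is a legitimate parameter; \textbf{P2} holds by the choices of $l''$ and $\delta$; \textbf{Q1} is immediate from $\sigma\subseteq\sigma'$, $\alpha\leq\alpha'$, $l\leq l''$, $F\subseteq F'$; \textbf{Q2} holds at every new position by the two cases (placed values via the cube-slack, dummies by construction); and \textbf{Q3} holds because each $\tau(i)$ with $\tau\in F$ and $i\in[l,l'')$ was explicitly inserted. I expect the genuine obstacle to be only the interlocking of \textbf{Q2} and \textbf{Q3}; it is resolved by the cube-slack observation together with the $\kappa$-closedness-style counting, and everything else is bookkeeping.
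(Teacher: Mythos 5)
Your proposal is correct and takes essentially the same route as the paper: the same interleaved allocation (the paper explicitly sends position $\text{dom}(\sigma)+\vert F\vert\cdot a+b$ to $\tau_b(l+a)$, which is exactly your greedy low-packing) and the same cube-slack estimate, namely that the new position $j$ receiving $\tau(i)$ satisfies $j\leq i^3$, hence $f(i^{3\cdot\beta})\geq f(j^\beta)$, which reconciles Q2 with Q3. The only cosmetic differences are that the paper fills every new position with a value from $F$ (no dummy padding) and fixes $l''=l+\tilde{l}$ explicitly.
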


\begin{proof}
Let $\{\tau_k \colon k < \vert F\vert\}$ enumerate $F$ and set $$\tilde{l}:= \sup\{M_\tau^{3 \cdot \beta} \colon \tau \in F', \, \beta < \alpha' \} \cup \max\{ \vert F' \vert, l',\eta\}.$$ Set $l'':=l + \tilde{l}$. Then $l'' \geq l'$. And we define $\sigma'$ as follows:
\begin{itemize}
\item $\sigma' \restriction \text{dom}(\sigma) =\sigma$
\item For $i \in (\text{dom}(\sigma) + \vert F \vert \cdot \tilde{l} ) \setminus \text{dom}(\sigma)$ such that $i= \text{dom}(\sigma) + \vert F \vert \cdot a + b$, where $a < \tilde{l}$ and $b < \vert F \vert$, we set $\sigma'(i):=\tau_b(l+a)$.
\end{itemize}
Then $\text{dom}(\sigma') \geq \eta$. Set $q:=(\sigma', \alpha', l'',F')$. Now we must check that $q \in \mathbb{O}_f$ and $q \leq p$.\\
\\
Let us first check that $q \in \mathbb{O}_f$. The following inequalities hold:
$$ \vert F'\vert \cdot l'' \geq \text{dom}(\sigma') = \text{dom}(\sigma) + \vert F \vert \cdot \tilde{l} \geq l''= l + \tilde{l} \geq \sup\{M_\tau^{3 \cdot \beta} \colon \tau \in F', \, \beta < \alpha'\} \cup \vert F' \vert $$ Therefore, $q \in \mathbb{O}_f$.\\
\\
Now let us check that $q \leq_{\mathbb{O}_f} p$: 
\begin{itemize}
\item [(Q1)] $\sigma \subseteq \sigma'$, $\alpha \leq \alpha'$, $l \leq l''$ and $F \subseteq F'$.
\item [(Q2)] We need to show that $\forall \beta < \alpha \,\, \forall i \in \text{dom}(\sigma') \setminus \text{dom}(\sigma) \colon g_{\sigma'}(i) \geq f(i^\beta)$. Let $\beta < \alpha$ and $i \in \text{dom}(\sigma') \setminus \text{dom}(\sigma)$ be arbitrary such that $i - \text{dom}(\sigma) =\vert F \vert \cdot a +b$. We note that $l+a \geq M_{\tau_b}^{3 \cdot \beta}$ by the definition of $\mathbb{O}_f$, and the following inequalities hold:
$$(l+a)^3 \geq (l+a) \cdot (l+a+1) \geq \vert F \vert \cdot (l+a+1) \geq \vert F \vert \cdot l + \vert F \vert \cdot a +b \geq i$$
Therefore, $g_{\sigma'} (i)= g_{\tau_b}(l+a) \geq f((l+a)^{3 \cdot \beta}) \geq f(i^\beta)$.
\item [(Q3)] Obviously, $\forall \tau \in F \,\, \forall i \in l'' \setminus l \,\, \exists j \in \text{dom}(\sigma') \colon \sigma'(j)=\tau(i)$ by the definition of $\sigma'$ and $l'' - l= \tilde{l}$.
\end{itemize}
\end{proof}

\begin{lemma}
$\mathbb{O}_f$ is ${<}\kappa$-closed.
\end{lemma}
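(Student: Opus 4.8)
The plan is to show that $\mathbb{O}_f$ is ${<}\kappa$-closed by taking an arbitrary decreasing chain of conditions of length $\gamma < \kappa$ and constructing a lower bound. So let $(p_\xi)_{\xi < \gamma}$ be a decreasing sequence with $p_\xi = (\sigma_\xi, \alpha_\xi, l_\xi, F_\xi)$ and $p_{\xi'} \leq_{\mathbb{O}_f} p_\xi$ whenever $\xi \leq \xi'$. The natural candidate for a lower bound is to take unions coordinatewise: set $\sigma^* := \bigcup_{\xi < \gamma} \sigma_\xi$, $\alpha^* := \sup_{\xi < \gamma} \alpha_\xi$, $l^* := \sup_{\xi < \gamma} l_\xi$ and $F^* := \bigcup_{\xi < \gamma} F_\xi$. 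Because the $\sigma_\xi$ are $\subseteq$-increasing by (Q1), $\sigma^*$ is a well-defined element of $(2^{<\kappa})^{<\kappa}$; and since $\kappa$ is regular (indeed inaccessible) and $\gamma < \kappa$, all four suprema and the union remain below $\kappa$, and $\vert F^* \vert < \kappa$ with $F^*$ infinite. The first thing I would verify is that $q := (\sigma^*, \alpha^*, l^*, F^*)$ actually satisfies (P1) and (P2), i.e.\ that it is a genuine condition.

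The main obstacle I anticipate is the verification of (P2) for the limit condition, specifically the chain of inequalities $\vert F^* \vert \cdot l^* \geq \text{dom}(\sigma^*) \geq l^* \geq \sup\{M_\tau^{3\cdot\beta} : \tau \in F^*,\ \beta < \alpha^*\} \cup \vert F^* \vert$. The delicate part is the bound $\text{dom}(\sigma^*) \geq l^*$ together with $\vert F^* \vert \cdot l^* \geq \text{dom}(\sigma^*)$: passing to suprema does not automatically preserve a product inequality, since $\vert F^* \vert$ and $l^*$ are both suprema and $\text{dom}(\sigma^*) = \sup_\xi \text{dom}(\sigma_\xi)$. Here I would exploit that each $p_\xi$ satisfies $\vert F_\xi \vert \cdot l_\xi \geq \text{dom}(\sigma_\xi) \geq l_\xi$, take suprema over $\xi$, and argue that because the sequences are increasing the limiting inequalities follow; for the term $\sup\{M_\tau^{3\cdot\beta} : \tau \in F^*,\ \beta < \alpha^*\}$ I would note that any $\tau \in F^*$ already appears in some $F_\xi$ and any $\beta < \alpha^*$ is below some $\alpha_\xi$, so each individual $M_\tau^{3\cdot\beta}$ is bounded by $l_\xi \leq l^*$. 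The case where $\gamma$ is a successor is trivial since then $q = p_{\gamma-1}$ works, so the real content is the limit case.

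Once $q$ is shown to be a condition, I would check $q \leq_{\mathbb{O}_f} p_\xi$ for every $\xi < \gamma$ by verifying (Q1)--(Q3). Condition (Q1) is immediate from the definitions of the unions and suprema. For (Q2), given $\beta < \alpha_\xi$ and $i \in \text{dom}(\sigma^*) \setminus \text{dom}(\sigma_\xi)$, I would locate the unique $\xi'$ with $\xi < \xi'$ such that $i \in \text{dom}(\sigma_{\xi'}) \setminus \text{dom}(\sigma_\xi)$ (more precisely, find $\xi'$ large enough that $i \in \text{dom}(\sigma_{\xi'})$) and invoke (Q2) for the extension $p_{\xi'} \leq_{\mathbb{O}_f} p_\xi$, which gives $g_{\sigma_{\xi'}}(i) \geq f(i^\beta)$; since $g_{\sigma^*}(i) = g_{\sigma_{\xi'}}(i)$ on that coordinate, the inequality transfers. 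For (Q3), given $\tau \in F_\xi$ and $i \in l^* \setminus l_\xi$, I would again choose $\xi'$ with $i < l_{\xi'}$ and apply (Q3) for $p_{\xi'} \leq_{\mathbb{O}_f} p_\xi$ to find the required $j \in \text{dom}(\sigma_{\xi'}) \subseteq \text{dom}(\sigma^*)$ with $\sigma^*(j) = \tau(i)$. This establishes that $q$ is a lower bound for the chain, completing the argument that $\mathbb{O}_f$ is ${<}\kappa$-closed.
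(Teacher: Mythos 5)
Your proposal is correct and follows essentially the same route as the paper: take coordinatewise unions/suprema of the chain, verify (P1)--(P2) for the limit candidate by pushing the inequalities $\vert F_\xi\vert\cdot l_\xi \geq \mathrm{dom}(\sigma_\xi) \geq l_\xi \geq \dots$ through the suprema, and then verify (Q1)--(Q3) against each $p_\xi$ by passing to a later stage $\xi'$ of the chain. The one step you flag as delicate (the product inequality and the bound $l^* \geq \vert F^*\vert$ at limits) is treated with exactly the same brevity in the paper's own proof, so there is nothing further to add.
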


\begin{proof}
Let $(p_k)_{k< \gamma}$ be a decreasing sequence of length $\gamma < \kappa$ such that $p_k=(\sigma_k, \alpha_k, l_k, F_k)$. Define $p:=(\sigma, \alpha, l, F)$, where $\sigma:=\bigcup_{k < \gamma} \sigma_k$, $\alpha:=\bigcup_{k < \gamma} \alpha_k$, $l:=\bigcup_{k < \gamma} l_k$ and $F:=\bigcup_{k < \gamma} F_k$.\\
Let us first check that $p \in \mathbb{O}_f$. Since $\vert F\vert \cdot l \geq \vert F_k\vert \cdot l_k$, the following inequalities hold:
$$\vert F\vert \cdot l \geq \text{dom}(\sigma) \geq l \geq \sup\{M_\tau^{3 \cdot \beta} \colon \tau \in F, \, \beta < \alpha\} \cup \vert F \vert$$
and therefore $p$ is indeed a condition.\\
Next let us check that $p$ is a lower bound of $(p_k)_{k< \gamma}$. Fix $p_k$ and we shall show that $p \leq_{\mathbb{O}_f} p_k$. (Q1) is trivially satisfied. For (Q2) fix $\beta < \alpha_k$ and $i \in \text{dom}(\sigma) \setminus \text{dom}(\sigma_k)$. Choose $k' > k$ such that $i \in \text{dom}(\sigma_{k'})$. But then $g_{\sigma_{k'}}(i) \geq f(i^\beta)$. (Q3) can be shown similarly.
\end{proof}

\begin{definition}
We call a forcing notion $\mathcal{P}$ $\kappa$-linked iff there exists $(X_i)_{i<\kappa}$ such that:
\begin{itemize}
\item $\mathcal{P}= \bigcup_{i< \kappa} X_i$
\item $\forall i < \kappa \colon X_i \,\, \text{is linked}$, i.e. $\forall p_1 ,p_2 \in  X_i, \colon p_1 \,\, \text{and} \,\, p_2 \,\, \text{are compatible}$.
\end{itemize}
\end{definition}

\begin{lemma} \label{L4}
$\mathbb{O}_f$ is $\kappa$-linked \footnote{Note that $\mathbb{O}_f$ is not $\kappa\text{-centered}_{<\kappa}$ (see definition \ref{D1}).}.
\end{lemma}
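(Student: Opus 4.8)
The plan is to witness $\kappa$-linkedness by grouping conditions according to their first three coordinates, which I will call the \emph{stem}: for $p=(\sigma,\alpha,l,F)\in\mathbb{O}_f$ the stem is $(\sigma,\alpha,l)$, and I put two conditions in the same piece exactly when they share a stem. The first task is to bound the number of stems. Since $\kappa$ is inaccessible we have $|2^{<\kappa}|=\kappa$, and hence $|(2^{<\kappa})^{<\kappa}|=\sum_{\gamma<\kappa}\kappa^{\gamma}=\kappa$; together with $\alpha,l<\kappa$ this leaves only $\kappa$ many possible stems. Enumerating them as $(\sigma_i,\alpha_i,l_i)_{i<\kappa}$ and setting $X_i:=\{(\sigma_i,\alpha_i,l_i,F):(\sigma_i,\alpha_i,l_i,F)\in\mathbb{O}_f\}$ gives $\mathbb{O}_f=\bigcup_{i<\kappa}X_i$ at once, so it remains only to show that each $X_i$ is linked.

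The key idea for linkedness is that two conditions $p_1=(\sigma,\alpha,l,F_1)$ and $p_2=(\sigma,\alpha,l,F_2)$ sharing a stem admit the \emph{obvious} common lower bound $p_0:=(\sigma,\alpha,l,F_1\cup F_2)$, obtained by keeping the stem fixed and merely uniting the two families. I would first verify that $p_0$ is genuinely a condition. Clause P1 is clear, since $F_1\cup F_2$ is infinite (as $F_1$ is) and of size $<\kappa$. For P2 the only inequality requiring thought is $l\geq|F_1\cup F_2|$: because $F_1,F_2$ are infinite of cardinality $\leq l$ we get $|F_1\cup F_2|=\max(|F_1|,|F_2|)\leq l$, while the clauses $\text{dom}(\sigma)\geq l\geq\sup\{M_\tau^{3\cdot\beta}:\tau\in F_1\cup F_2,\,\beta<\alpha\}$ and $|F_1\cup F_2|\cdot l\geq\text{dom}(\sigma)$ follow immediately, since neither $\text{dom}(\sigma)$ nor the relevant supremum is affected by enlarging the family.

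Next I would check $p_0\leq_{\mathbb{O}_f}p_1$ and $p_0\leq_{\mathbb{O}_f}p_2$. Clause Q1 is immediate, as $F_j\subseteq F_1\cup F_2$ and the stem is unchanged. The crucial observation is that Q2 and Q3 hold \emph{vacuously}: since $p_0$ keeps $\sigma$ and $l$ fixed, we have $\text{dom}(\sigma_{p_0})\setminus\text{dom}(\sigma_{p_j})=\emptyset$ and $l_{p_0}\setminus l_{p_j}=\emptyset$, so there are no indices $i$ to quantify over. Hence $p_0$ is a common extension of $p_1$ and $p_2$, they are compatible, and every $X_i$ is linked, which finishes the proof.

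I expect the main point to be the cardinality bound rather than the compatibility check, since it is precisely the inaccessibility of $\kappa$ that pins the width of the cover at $\kappa$. I would also emphasize why this argument does \emph{not} upgrade to a stronger centeredness property: for $<\kappa$ many conditions $(\sigma,\alpha,l,F_j)_{j<\gamma}$ sharing a stem, the analogous candidate $(\sigma,\alpha,l,\bigcup_{j<\gamma}F_j)$ can fail P2, because $|\bigcup_{j<\gamma}F_j|$ may exceed the fixed value $l$ even though each $|F_j|\leq l$. This is exactly the obstruction recorded in the footnote, and it explains why linkedness, and not a $<\kappa$-sized common-extension property, is the correct conclusion here.
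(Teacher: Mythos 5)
Your proof is correct, and it uses the same decomposition as the paper: the cover $\mathbb{O}_f=\bigcup X_{(\sigma,\alpha,l)}$ indexed by the "stem" $(\sigma,\alpha,l)$, with inaccessibility giving $\vert(2^{<\kappa})^{<\kappa}\times\kappa\times\kappa\vert=\kappa$, and the same crucial observation that $\vert F_1\cup F_2\vert=\max(\vert F_1\vert,\vert F_2\vert)\leq l$ because both families are infinite. Where you diverge is the last step: the paper does not take the trivial candidate $(\sigma,\alpha,l,F_1\cup F_2)$ but instead invokes the construction of Lemma \ref{L3} to build a proper extension $q=(\sigma',\alpha,l',F_1\cup F_2)$ with $\sigma'\supsetneq\sigma$ and $l'>l$. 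Your shortcut is legitimate: P1 and P2 survive taking the union (note that the supremum $\sup\{M_\tau^{3\cdot\beta}:\tau\in F_1\cup F_2,\ \beta<\alpha\}$ \emph{does} grow when the family is enlarged, contrary to your phrasing, but it is still the maximum of two ordinals each $\leq l$, so the bound holds), and Q2 and Q3 are vacuous since neither $\mathrm{dom}(\sigma)$ nor $l$ changes, so the union really is a common lower bound. This makes your verification more elementary and self-contained, at the cost of not illustrating how genuine extensions are built; the paper's detour through Lemma \ref{L3} costs nothing extra since that lemma is needed anyway for the density arguments. Your closing remark correctly identifies why the same trick does not yield $\kappa$-$\text{centered}_{<\kappa}$: for $\gamma>l$ many conditions with a common stem, $\vert\bigcup_{j<\gamma}F_j\vert$ can exceed $l$, so the candidate violates P2 (though, as you implicitly acknowledge, this only shows the obstruction to this particular argument, not the failure of centeredness asserted in the footnote).
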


\begin{proof}
For $\sigma \in (2^{<\kappa})^{<\kappa}$, $\alpha < \kappa$ and $l < \kappa$ define the set $X_{(\sigma, \alpha,l)}:=\{ p \in \mathbb{O}_f \colon \exists F \subset \mathcal{S}(f) \,\,\, p =(\sigma, \alpha,l,F)\}$. We will show that $X_{(\sigma, \alpha,l)}$ is linked. Then $\mathbb{O}_f$ will be $\kappa$-linked, because $\mathbb{O}_f=\bigcup_{\sigma \in (2^{<\kappa})^{<\kappa}} \bigcup_{\alpha < \kappa} \bigcup_{l< \kappa} X_{(\sigma, \alpha,l)}$.\\
Fix $(\sigma, \alpha,l)$ and let $p_1, p_2 \in  X_{(\sigma, \alpha,l)}$. Set $F:=F_{p_1} \cup F_{p_2}$ and note that $\vert F \vert =\max( \vert F_{p_1} \vert, \vert F_{p_2}\vert)$ and therefore, $l \geq \vert F\vert$, which is crucial for the proof. Similar to the proof of Lemma \ref{L3} we can construct a $q=(\sigma', \alpha, l', F)$ which is a lower bound of $p_1$ and $p_2$.
\end{proof}

Assume that $V \vDash \kappa \,\, \text{is inaccessible}$ and let $f \in \kappa^\kappa \cap V$ be strictly increasing. Furthermore, let $G$ be a $( \mathbb{O}_f , V)$-generic filter. Define $\tau_G:=\bigcup\{\sigma \in (2^{<\kappa})^{<\kappa}\colon \exists p \in G \, \, p=(\sigma, \alpha_p, l_p, F_p)\}$.  Then the following lemma is an easy observation:

\begin{lemma}
The following holds in $V^{{\mathbb{O}_f}}$:

\begin{enumerate}
\item $\tau_G \in (2^{<\kappa})^\kappa$
\item $g_{\tau_G} \gg f$, so $\tau_G \in \mathcal{S}(f)$
\item $ \forall \tau \in \mathcal{S}(f) \cap V \colon Y(\tau) \subset Y(\tau_G)$
\end{enumerate}
So $\tau_G$ codes a $f$-small set which covers all ground model $f$-small sets.
\end{lemma}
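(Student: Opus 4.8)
The plan is to verify the three claims in turn, each by a standard density argument using Lemma \ref{L3}, which guarantees that conditions can be extended to make the domain of $\sigma$ as large as we like and $l$ as large as we like while enlarging $\alpha$ and $F$ arbitrarily. First I would record the key genericity inputs: for each $\eta < \kappa$ the set $D_\eta := \{ p \in \mathbb{O}_f : \mathrm{dom}(\sigma_p) \geq \eta \}$ is dense (apply Lemma \ref{L3} with $\alpha' = \alpha_p$, $l' = l_p$, $F' = F_p$ and the given $\eta$); for each $\beta < \kappa$ the set $E_\beta := \{ p : \alpha_p \geq \beta \}$ is dense (apply Lemma \ref{L3} with $\alpha' = \max(\alpha_p, \beta)$); and for each $\tau \in \mathcal{S}(f) \cap V$ the set $C_\tau := \{ p : \tau \in F_p \}$ is dense (apply Lemma \ref{L3} with $F' = F_p \cup \{\tau\}$, noting $F_p \cup \{\tau\}$ is still infinite of size $< \kappa$). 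Since $\kappa$ is inaccessible and $\mathbb{O}_f$ is ${<}\kappa$-closed, meeting these $\kappa$-many dense sets is unproblematic.

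For claim (1), density of the $D_\eta$ forces $\mathrm{dom}(\tau_G) = \kappa$, and the conditions in $G$ are pairwise compatible with matching restrictions (by Q1 the $\sigma_p$ form a $\subseteq$-chain along $G$), so $\tau_G$ is a well-defined function in $(2^{<\kappa})^\kappa$. For claim (2), I would show $f \ll g_{\tau_G}$ directly from the definition of $\ll$: fix $\beta < \kappa$. By density of $E_{\beta+1}$ pick $p \in G$ with $\alpha_p > \beta$, and let $\eta_0 := \mathrm{dom}(\sigma_p)$. For any $i \geq \eta_0$, property Q2 of the extension from $p$ to the relevant condition in $G$ gives $g_{\tau_G}(i) = g_{\sigma'}(i) \geq f(i^\beta)$, since $i \in \mathrm{dom}(\sigma') \setminus \mathrm{dom}(\sigma_p)$. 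Thus for every $\beta$ there is a threshold $\eta_0$ beyond which $g_{\tau_G}(i) \geq f(i^\beta)$, which is exactly $f \ll g_{\tau_G}$; hence $\tau_G \in \mathcal{S}(f)$.

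For claim (3), fix $\tau \in \mathcal{S}(f) \cap V$; I want $Y(\tau) \subseteq Y(\tau_G)$. By density of $C_\tau$ there is $p_0 \in G$ with $\tau \in F_{p_0}$. Now the point is that Q3 guarantees that every value $\tau(i)$ (for $i$ large enough) is eventually copied into the range of $\tau_G$: for each threshold, density of the sets forcing $l_p$ to grow together with $C_\tau$ ensures that for cofinally many $i < \kappa$ there is some $j < \kappa$ with $\tau_G(j) = \tau(i)$. I would then argue that given $x \in Y(\tau) = \bigcap_{i} \bigcup_{k \geq i} [\tau(k)]$, the fact that $x$ lies in cofinally many $[\tau(i)]$, each of which reappears as $[\tau_G(j)]$ for arbitrarily large $j$, yields $x \in \bigcap_{i} \bigcup_{j \geq i} [\tau_G(j)] = Y(\tau_G)$.

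The main obstacle I anticipate is claim (3): one must check carefully that Q3 not only copies $\tau(i)$ somewhere into $\tau_G$ but does so \emph{cofinally}, so that the inclusion of intersections-of-tails is preserved rather than merely the inclusion of unions. Concretely, I would need that for each $i_0 < \kappa$ there are cofinally many $j$ with $\tau_G(j) = \tau(i)$ for some $i \geq i_0$; this follows because $l_p \to \kappa$ along $G$ (density via Lemma \ref{L3}), and Q3 applied at successive conditions copies each new initial segment $\tau \restriction (l_{p'} \setminus l_p)$ into fresh, ever-later coordinates of $\sigma_{p'}$, so no tail of $\tau$ is lost. Verifying this cofinality bookkeeping is the only non-routine part; the rest is bookkeeping with the density sets above.
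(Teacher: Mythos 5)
Your proposal is correct and follows essentially the same route as the paper: density of the sets forcing $\mathrm{dom}(\sigma_p)$, $\alpha_p$, and $F_p$ to grow (all via Lemma \ref{L3}), with Q2 giving claim (2) and Q3 giving claim (3). The cofinality concern you flag for claim (3) is resolved exactly as you suggest and as the paper does: for $x \in Y(\tau)$ the set $\{i : x \in [\tau(i)]\}$ has size $\kappa$, and since $l_p \to \kappa$ along $G$, Q3 copies $\kappa$-many of these blocks into $\tau_G$, so $\{j : x \in [\tau_G(j)]\}$ also has size $\kappa$ and hence is cofinal.
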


\begin{proof}
\begin{enumerate}
\item By Lemma \ref{L3} the set $\{p \in \mathbb{O}_f \colon \text{dom}(\sigma_p)\geq \eta\}$ is dense for all $\eta < \kappa$. Therefore, $\tau_G \in (2^{<\kappa})^\kappa$.
\item Let $\alpha <\kappa$ be arbitrary. By a density argument there exists $p \in G$ such that $\alpha_p \geq \alpha+1$. But then $g_{\tau_G}(i) \geq f(i^\alpha)$ for all $i \geq \text{dom}(\sigma_p)$.
\item Let $\tau \in \mathcal{S}(f) \cap V $ be arbitrary and fix $x \in Y(\tau)$, so the set $\{i <\kappa \colon x \in [\tau(i)]\}$ has size $\kappa$. By a density argument there exists $p,q \in G$ such that $\tau \in F_p$, $l_q$ is arbitrarily large and $q\leq_{\mathbb{O}_f} p$. Hence the set $\{i \geq \text{dom}(\sigma_p) \colon x \in [\tau_G(i)]\}$ will also be of size $\kappa$.
\end{enumerate}
\end{proof}

\begin{definition}
Let $\lambda > \kappa$ be a regular cardinal. Define $B^\lambda$ to be a bijection between $\lambda$ and $\lambda \times \lambda$ such that:
\begin{itemize}
\item If $B^\lambda(\alpha)=(\beta,\gamma)$ then $\beta \leq \alpha$
\item If $B^\lambda(\alpha)=(\beta, \gamma)$, $B^\lambda(\alpha')=(\beta, \gamma')$ and $\alpha < \alpha'$ then $\gamma < \gamma'$
\end{itemize}
Furthermore, define $B_0^\lambda(\alpha)$ and $B_1^\lambda(\alpha)$ to be the projection of $B^\lambda(\alpha)$ onto the first and second coordinate, respectively.
\end{definition}

Now we are ready to define the iteration:

\begin{definition}
Let $(\mathbb{P}_\epsilon, \dot{\mathbb{Q}}_\beta \colon \epsilon \leq \lambda, \beta < \lambda)$ be a ${<}\kappa$-support iteration such that $\forall \epsilon < \lambda  \colon \Vdash_\epsilon \dot{\mathbb{Q}_\epsilon}= \dot{\mathbb{H}_\kappa} * \dot{\mathbb{O}}(\dot{d}_{B_0^\lambda(\epsilon)})$, where $\mathbb{H}_\kappa$ denotes $\kappa$-Hechler forcing and $\dot{d}_{B_0^\lambda(\epsilon)}$ is the generic $\kappa$-Hechler added by the first half of $\dot{\mathbb{Q}}_{B_0^\lambda(\epsilon)}$.
\end{definition}

Note that $\mathbb{H}_\kappa$ is $\kappa$-linked. Even more, it is $\kappa$-$\text{centered}_{<\kappa}$ :

\begin{definition} \label{D1}
We call a forcing notion $\mathcal{P}$ $\kappa$-$\text{centered}_{<\kappa}$ iff there exists $(X_i)_{i<\kappa}$ such that:
\begin{itemize}
\item $\mathcal{P}= \bigcup_{i< \kappa} X_i$
\item $\forall i < \kappa \colon X_i \,\, \text{is} \,\, \text{centered}_{<\kappa}$, i.e. $\forall A \subset  X_i, \colon \vert A \vert < \kappa \Rightarrow \exists q \in \mathcal{P} \,\, q \,\, \text{is a lower bound of} \,\, A$.
\end{itemize}
\end{definition}

The following lemma should be a straightforward consequence of Lemma \ref{L4}:

\begin{lemma}
$\mathbb{P}_\lambda$ satisfies the $\kappa^+$-c.c. Furthermore, if $\vert 2^\kappa\vert <\lambda$ and $\lambda^\kappa= \lambda$ or $\vert2^\kappa\vert \geq \lambda$, then there exists a dense set $D \subseteq \mathbb{P}_\lambda$ of size $\max(\vert2^\kappa \vert,\lambda)$.
\end{lemma}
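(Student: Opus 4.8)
The statement has two parts, both of which I would anchor on Lemma~\ref{L4}. For the chain condition the plan is to upgrade $\kappa$-linkedness to the $\kappa^+$-Knaster property and then show this survives the iteration. First observe that any $\kappa$-linked forcing $\mathcal{P}=\bigcup_{i<\kappa}X_i$ is $\kappa^+$-Knaster: given $\{p_\xi : \xi<\kappa^+\}$, regularity of $\kappa^+$ and the pigeonhole principle place $\kappa^+$ of the $p_\xi$ into a single linked $X_i$, and these are then pairwise compatible. By Lemma~\ref{L4} each $\dot{\mathbb{O}}(\dot d)$ is forced to be $\kappa$-linked, and $\mathbb{H}_\kappa$ is $\kappa$-$\text{centered}_{<\kappa}$, hence $\kappa$-linked; so each two-step iterand $\dot{\mathbb{Q}}_\epsilon$ is (forced to be) $\kappa^+$-Knaster. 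It then remains to prove that a ${<}\kappa$-support iteration of $\kappa^+$-Knaster forcings is $\kappa^+$-Knaster, using that $\kappa$ is inaccessible so that $\kappa^{<\kappa}=\kappa$.

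For that preservation step I would take $\{p_\xi : \xi<\kappa^+\}\subseteq\mathbb{P}_\lambda$, note that every support has size ${<}\kappa$, and apply the $\Delta$-system lemma — valid since $\kappa^+$ is regular and $\mu^{<\kappa}\le\kappa<\kappa^+$ for all relevant $\mu$ — to thin to $\kappa^+$ conditions whose supports form a $\Delta$-system with root $R$, $\vert R\vert<\kappa$. On each $\epsilon\in R$ the value $p_\xi(\epsilon)$ carries a name $\dot{c}^\epsilon_\xi<\kappa$ for its linkedness-colour. The idea is to strengthen each $p_\xi$ to a condition deciding every $\dot{c}^\epsilon_\xi$ and then colour $\xi$ by the resulting function $R\to\kappa$; there are only $\kappa^{\vert R\vert}=\kappa^{<\kappa}=\kappa$ such colours, so $\kappa^+$ many $\xi$ share one, and for these the root-coordinates are pairwise compatible (matching colours lie in a common linked piece) while off $R$ the supports are disjoint. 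The main obstacle is exactly this decision step: the colours are names rather than ground-model objects, so the strengthening cannot be done coordinatewise and independently but must be carried out as a coherent recursion along the iteration, matching the colour-deciding conditions on initial segments and using the ${<}\kappa$-closure of the iterands together with the $\kappa^+$-c.c.\ of $\mathbb{P}_\epsilon$ as an inductive hypothesis. This is the heart of the preservation theorem and is where inaccessibility of $\kappa$ is essential.

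For the density assertion I would count nice names. Each iterand has a dense subset of size $\le 2^\kappa$: the conditions $(\sigma,\alpha,l,F)$ of $\mathbb{O}_f$ number $(2^\kappa)^{<\kappa}=2^\kappa$ (there are $\kappa$ choices for $\sigma,\alpha,l$ and $2^\kappa$ for $F\subseteq\mathcal{S}(f)$), and $\mathbb{H}_\kappa$ has size $\kappa^{<\kappa}\cdot\kappa^\kappa=2^\kappa$. Writing $\theta:=\max(\vert 2^\kappa\vert,\lambda)$, both hypotheses yield $\theta^\kappa=\theta$: if $\vert 2^\kappa\vert\ge\lambda$ then $\theta=2^\kappa$ and $(2^\kappa)^\kappa=2^\kappa$, while if $\vert 2^\kappa\vert<\lambda$ and $\lambda^\kappa=\lambda$ then $\theta=\lambda=\theta^\kappa$. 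I would then prove by induction on $\epsilon\le\lambda$ that $\mathbb{P}_\epsilon$ has a dense subset $D_\epsilon$ of size $\le\theta$ whose conditions have all coordinate values given by \emph{nice names}: each $p(\beta)$ is a function from a maximal antichain of $\mathbb{P}_\beta$ — of size $\le\kappa$ by the $\kappa^+$-c.c.\ established above — into the size-$2^\kappa$ dense subset of $\dot{\mathbb{Q}}_\beta$. The number of such nice names over $D_\beta$ is at most $(\vert D_\beta\vert\cdot 2^\kappa)^\kappa\le\theta^\kappa=\theta$, a support is one of $\le\vert\epsilon\vert^{<\kappa}\le\theta$ many subsets of $\epsilon$ of size ${<}\kappa$, and multiplying the fewer-than-$\kappa$ coordinate choices keeps the total at $\theta^{<\kappa}=\theta$. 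Density of $D_\epsilon$ follows because, invoking the $\kappa^+$-c.c., every coordinate value can be replaced below by a nice name. The expected difficulty here is purely bookkeeping: arranging the recursion so that the nice names used at stage $\beta$ refer only to antichains already contained in $D_\beta$, which is precisely what the induction hypothesis supplies.
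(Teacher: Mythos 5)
Your overall strategy coincides with the paper's: for the chain condition, exploit the $\kappa$-linked decomposition of Lemma \ref{L4} (and the analogous one for $\mathbb{H}_\kappa$) together with a $\Delta$-system and a pigeonhole on the $\kappa$ many colours; for the size of the dense set, count nice names by induction on $\epsilon$ using the $\kappa^+$-c.c.\ of the initial segments and the arithmetic $\max(\vert 2^\kappa\vert,\lambda)^\kappa=\max(\vert 2^\kappa\vert,\lambda)$ available under either hypothesis. The density half of your argument is essentially identical to the paper's and is fine.

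The one place where your organization creates a real, though repairable, difficulty is the order of operations in the c.c.\ argument. You apply the $\Delta$-system lemma to the original conditions and only afterwards strengthen each $p_\xi$ so as to decide the colours $\dot{c}^\epsilon_\xi$ on the root $R$. But deciding a name at coordinate $\epsilon$ means strengthening $p_\xi\restriction\epsilon$, which in general enlarges the support below $\epsilon$; after this the supports need no longer form a $\Delta$-system with root $R$, and re-running the $\Delta$-system produces a new root on which the colours are again undecided. You correctly sense that this forces a ``coherent recursion,'' but you do not close it. The paper sidesteps the problem by reversing the order: it first exhibits the dense set $D$ of conditions $p$ such that at \emph{every} $\epsilon$ in the support $p\restriction\epsilon$ already forces $\dot{p}(\epsilon)$ to equal $((\rho,\dot{f}),(\sigma,\alpha,l,(\dot{g}_i)_{i<\gamma}))$ with $\rho,\sigma,\alpha,l$ genuine ground-model objects (density of $D$ is a recursion along the support using ${<}\kappa$-closure and Lemma \ref{L3}), and only then applies the $\Delta$-system to $\kappa^+$ many elements of $D$ and thins by the now fully decided tuple of colours on the root, where compatibility at each root coordinate is witnessed concretely as in Lemma \ref{L4} rather than through an abstract preservation theorem for the $\kappa^+$-Knaster property. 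I would recommend adopting that order; the general claim that ${<}\kappa$-support iterations of $\kappa^+$-Knaster forcings are $\kappa^+$-Knaster is exactly the statement you would otherwise still have to prove, and its proof reduces to the same manoeuvre.
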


\begin{proof}
The set $$D:=\{p \in \mathbb{P}_\lambda \colon \forall \epsilon < \lambda \, \, \exists \rho \in \kappa^{<\kappa} \,\, \exists \sigma \in (2^{<\kappa})^{<\kappa} \,\,  \exists \alpha < \kappa \,\, \exists l < \kappa \,\, \exists \dot{f} \,\, \exists (\dot{g}_i)_{i<\gamma} $$ $$p \restriction \epsilon \Vdash_\epsilon  \dot{p(\epsilon)}=((\rho, \dot{f}),(\sigma, \alpha, l, (\dot{g}_i)_{i<\gamma}))\}$$
is dense in $\mathbb{P}_\lambda$. Let $A\subset D$ be of size $\kappa^+$ and use a $\Delta$-system argument to find $B \subset A$ of size $\kappa^+$ such that $B$ is linked. Hence $\mathbb{P}_\lambda$ satisfies the $\kappa^+$-c.c.\\
Show by induction over $\epsilon < \gamma$ that $\vert D \cap \mathbb{P}_\epsilon\vert \leq \max(\vert 2^\kappa \vert,\lambda)$ by using the $\kappa^+$-c.c. of $\mathbb{P}_\epsilon$ and the fact that every $\mathbb{P}_\epsilon$-name for an element of $\kappa^\kappa$ is completely determined by a family of maximal antichains of size $\kappa$.
\end{proof}

We are ready to state and prove the following theorem:

\begin{theorem}
Let $\lambda > \kappa$ be regular. Let $V \vDash \vert 2^\kappa\vert \geq \lambda \,\, \text{or} \,\, \vert2^\kappa\vert < \lambda \land \lambda^\kappa=\lambda$. Then in $V^{\mathbb{P}_\lambda}$ the following holds: $\text{add}(\mathcal{M}_\kappa)=\mathfrak{d}_\kappa= \lambda$ and there exists a dominating family $\{f_\alpha \in \kappa^\kappa \colon \alpha < \lambda\}$ such that $\text{add}(\mathcal{I}(f_\alpha))=\text{cof}(\mathcal{I}(f_\alpha)) =\lambda$. Therefore $\text{cof}(\mathcal{SN})=\mathfrak{d}_{\mathfrak{d}_\kappa}= \mathfrak{d}_\lambda$.\\
Furthermore, $\vert 2^\kappa\vert =\max(\vert 2^\kappa \cap V \vert, \lambda)$, $\mathfrak{d}_\lambda=\mathfrak{d}_\lambda^V$ and $\text{add}(\mathcal{SN})= \text{cov}(\mathcal{SN})=\text{non}(\mathcal{SN})=\lambda$.
\end{theorem}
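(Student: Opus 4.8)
The plan is to verify the two hypotheses of Theorem \ref{T2} inside $V^{\mathbb{P}_\lambda}$, read off $\text{cof}(\mathcal{SN})$ from it, and then compute the remaining invariants by hand. First I would pin down the ``classical'' cardinals. The $\kappa$-Hechler reals $d_\beta$ ($\beta<\lambda$) added along the iteration form an increasing dominating family, so $\mathfrak{b}_\kappa=\mathfrak{d}_\kappa\le\lambda$; conversely, by the $\kappa^+$-c.c. every family of $<\lambda$ functions in $\kappa^\kappa$ is added at some bounded stage $\delta<\lambda$ (supports have size $<\kappa$ and $\lambda$ is regular $>\kappa$), hence is dominated by $d_\delta$, giving $\mathfrak{b}_\kappa=\mathfrak{d}_\kappa=\lambda$. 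Since $\mathbb{H}_\kappa$ adds $\kappa$-Cohen reals cofinally we get $\text{cov}(\mathcal{M}_\kappa)\ge\lambda$, while $\text{cov}(\mathcal{M}_\kappa)\le\mathfrak{d}_\kappa=\lambda$; thus $\text{add}(\mathcal{M}_\kappa)=\min(\mathfrak{b}_\kappa,\text{cov}(\mathcal{M}_\kappa))=\lambda=\mathfrak{d}_\kappa$, which is the first hypothesis of Theorem \ref{T2}.

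The heart of the argument is the second hypothesis: the family $\{d_\beta:\beta<\lambda\}$ satisfies $\text{add}(\mathcal{I}(d_\beta))=\text{cof}(\mathcal{I}(d_\beta))=\lambda$, and this is exactly what the design of $\mathbb{O}_f$ together with the bookkeeping $B^\lambda$ is for. For fixed $\beta$ the stages $\alpha$ with $B_0^\lambda(\alpha)=\beta$ are cofinal in $\lambda$ (the fibres of $B_0^\lambda$ have size $\lambda$ and are cofinal by the monotonicity clause in the definition of $B^\lambda$), and at each such $\alpha$ the forcing $\dot{\mathbb{O}}(d_\beta)$ adds a generic $d_\beta$-small set $Y(\tau_{G_\alpha})$ covering every $d_\beta$-small set of $V^{\mathbb{P}_\alpha}$. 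Combined with $\kappa^+$-c.c. reflection---every $d_\beta$-small set is coded by some $\sigma\in\mathcal{S}(d_\beta)\subseteq(2^{<\kappa})^\kappa$, a name of support $\le\kappa<\lambda$, hence appearing at a bounded stage---this yields both bounds: the family $\{Y(\tau_{G_\alpha}):B_0^\lambda(\alpha)=\beta\}$ is cofinal in $\mathcal{I}(d_\beta)$, so $\text{cof}(\mathcal{I}(d_\beta))\le\lambda$, and any $<\lambda$ many $d_\beta$-small sets reflect to a common stage $\delta<\lambda$ and are absorbed by the next generic beyond $\delta$, so $\text{add}(\mathcal{I}(d_\beta))\ge\lambda$. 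Since $\text{add}\le\text{cof}$ always, both equal $\lambda$, and Theorem \ref{T2} gives $\text{cof}(\mathcal{SN})=\mathfrak{d}_{\mathfrak{d}_\kappa}=\mathfrak{d}_\lambda$. I expect this verification to be the main obstacle, as it is the only place the full forcing is used and where the reflection and cofinal-covering arguments must be made precise.

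For the ``furthermore'' clause I would argue as follows. The value $\vert2^\kappa\vert=\max(\vert2^\kappa\cap V\vert,\lambda)$ follows by counting nice names: the lower bound is clear since $\lambda$ new reals are added and ground-model reals persist, and the upper bound uses the dense set of size $\max(\vert2^\kappa\vert,\lambda)$, the $\kappa^+$-c.c., and $(\max(\vert2^\kappa\cap V\vert,\lambda))^\kappa=\max(\vert2^\kappa\cap V\vert,\lambda)$, which holds in either case of the hypothesis. That $\mathfrak{d}_\lambda=\mathfrak{d}_\lambda^V$ is the standard fact that a $\kappa^+$-c.c. (hence $\lambda$-c.c.) forcing adds no undominated function in $\lambda^\lambda$ and pulls any new small dominating family back into the ground model, so $\mathfrak{d}_\lambda$ is preserved.

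Finally, for $\text{add}(\mathcal{SN})=\text{cov}(\mathcal{SN})=\text{non}(\mathcal{SN})=\lambda$ I would exploit $\mathcal{SN}=\bigcap_{\alpha<\lambda}\mathcal{I}(d_\alpha)$ together with the squeeze $\text{add}(\mathcal{I}(d_\alpha))\le\text{non}(\mathcal{I}(d_\alpha)),\text{cov}(\mathcal{I}(d_\alpha))\le\text{cof}(\mathcal{I}(d_\alpha))$, which forces $\text{non}(\mathcal{I}(d_\alpha))=\text{cov}(\mathcal{I}(d_\alpha))=\lambda$. Then $\text{add}(\mathcal{SN})\ge\lambda$ since a union of $<\lambda$ sets in $\mathcal{SN}$ stays in each $\mathcal{I}(d_\alpha)$, which has additivity $\lambda$. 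Next $\text{non}(\mathcal{SN})=\lambda$: any non-$\mathcal{SN}$ set fails to be $f$-small for some $f$, hence fails to be $d_\alpha$-small for a dominating $d_\alpha$ (as $\mathcal{I}(d_\alpha)\subseteq\mathcal{I}(f)$ when $f\le d_\alpha$), so has size $\ge\text{non}(\mathcal{I}(d_\alpha))=\lambda$, and a witness of size $\lambda$ exists. For $\text{cov}(\mathcal{SN})\le\lambda$ I would exhibit the cover $2^\kappa=\bigcup_{\eta<\lambda}\bigcap_{\alpha\ge\eta}A_\alpha^0$, choosing $A_\alpha^0$ to be the generic $d_{\beta}$-small set absorbing the small sets of its intermediate model: each point $x$ appears at a bounded stage, while the scale functions are born cofinally, so $\{x\}\subseteq A_\alpha^0$ for all but $<\lambda$ (hence boundedly) many $\alpha$, placing $x$ in one of these $\lambda$ many $\mathcal{SN}$ sets. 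The inequalities $\text{add}(\mathcal{SN})\le\text{cov}(\mathcal{SN})$ and $\text{add}(\mathcal{SN})\le\text{non}(\mathcal{SN})$ then close the computation at $\lambda$.
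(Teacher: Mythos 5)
Your proposal is correct and follows essentially the same route as the paper: establish $\text{add}(\mathcal{M}_\kappa)=\mathfrak{d}_\kappa=\lambda$ via the Hechler scale and Cohen reals, verify $\text{add}(\mathcal{I}(d_\beta))=\text{cof}(\mathcal{I}(d_\beta))=\lambda$ using the generics $\tau_{G_\alpha}$ along the fibres of $B_0^\lambda$ together with $\kappa^+$-c.c.\ reflection, invoke Theorem \ref{T2}, and compute the remaining invariants by nice-name counting, $\lambda^\lambda$-bounding, and the tail/initial-segment covering argument. The only differences are cosmetic: you make the reflection argument (which the paper compresses into ``since $\mathcal{I}(\dot{d}_\epsilon)$ is a $\kappa$-Borel ideal'') explicit, and you route $\text{non}(\mathcal{SN})\le\lambda$ and $\text{cov}(\mathcal{SN})\le\lambda$ through the Cicho\'n-type inequalities for $\mathcal{I}(d_\alpha)$ and tails of the matrix rather than the paper's explicit diagonal set $\{x_\epsilon\}$ and the sets $2^\kappa\cap V^{\mathbb{P}_\epsilon}$, which amounts to the same construction.
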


\begin{proof}
Using the $\kappa^+$-c.c., the following should be straightforward: The $\kappa$-Hechler reals $(\dot{d}_\epsilon)_{\epsilon < \lambda}$ form a $\kappa$-scale, so $\mathfrak{d}_\kappa=\mathfrak{b}_\kappa=\lambda$. A ${<}\kappa$-support iteration adds cofinally many Cohen reals, therefore also $\text{cov}(\mathcal{M}_\kappa)=\lambda$, and since $\text{add}(\mathcal{M}_\kappa)= \min( \mathfrak{b}_\kappa, \text{cov}(\mathcal{M}_\kappa)$ by \cite{Baumhauer}, it follows that $\text{add}(\mathcal{M}_\kappa)=\mathfrak{d}_\kappa= \lambda$. The family $(\dot{\tau}_{\epsilon'})_{ \epsilon' \in B^{\lambda \, ^{-1}} (\{\epsilon\} \times \lambda)}$ witnesses $\text{add}(\mathcal{I}(\dot{d}_\epsilon))=\text{cof}(\mathcal{I}(\dot{d}_\epsilon)) =\lambda$, since $\mathcal{I}(\dot{d}_\epsilon))$ is a $\kappa$-Borel ideal. Hence $\text{cof}(\mathcal{SN})=\mathfrak{d}_{\mathfrak{d}_\kappa}$ by Theorem \ref{T2}.\\
Since $\mathbb{P}_\lambda$ has a dense subset $D$ of size $\max(\vert 2^\kappa \cap V \vert, \lambda)$, it satisfies the $\kappa^+$-c.c. and $\lambda^\kappa=\lambda$, it follows that $\vert 2^\kappa\vert \leq \vert\max(\vert 2^\kappa \cap V \vert, \lambda)$. Again using the $\kappa^+$-c.c., it follows that $\mathbb{P}_\lambda$ is $\lambda^\lambda$-bounding, hence $\mathfrak{d}_\lambda=\mathfrak{d}_\lambda^V$. Using $\text{add}(\mathcal{I}(\dot{d}_\epsilon))=\lambda$ for all $\epsilon< \lambda$, we can deduce $\lambda \leq \text{add}(\mathcal{SN})$. For $\text{cov}(\mathcal{SN}) \leq \lambda$ we note that $2^\kappa \cap V^{\mathbb{P}_\epsilon} \in \mathcal{SN}$ for all $\epsilon<\lambda$ as being witnessed by $(\dot{\tau}_{\epsilon'})_{\epsilon'> \epsilon}$. For $\text{non}(\mathcal{SN}) \leq \lambda$ we pick for every $\dot{\tau}_\epsilon$ possibly not distinct $x_\epsilon$'s such that $x_\epsilon \in 2^\kappa \setminus Y(\dot{\tau}_\epsilon)$ and set $X:=\{x_\epsilon \colon \epsilon< \lambda\}$. It follows that $\vert X \vert \leq \lambda$ and $X \notin \mathcal{SN}$, since no $\dot{\tau}_\epsilon$ can cover $X$.
\end{proof}

\begin{corollary}
$\mathfrak{c}_\kappa < \text{cof}(\mathcal{SN})$, $\mathfrak{c}_\kappa = \text{cof}(\mathcal{SN})$ and $\mathfrak{c}_\kappa > \text{cof}(\mathcal{SN})$ are all consistent relative to ZFC.
\end{corollary}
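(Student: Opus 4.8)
The plan is to realize each of the three consistency statements by instantiating the previous theorem with suitably chosen $\lambda$, exploiting that the theorem lets us separately control the ground-model value of $2^\kappa$ and the iteration length $\lambda$. Recall that the previous theorem shows, under $V\vDash \vert 2^\kappa\vert \geq \lambda$ or ($\vert 2^\kappa\vert < \lambda \land \lambda^\kappa=\lambda$), that in $V^{\mathbb{P}_\lambda}$ we have $\mathfrak{c}_\kappa=\vert 2^\kappa\vert=\max(\vert 2^\kappa\cap V\vert,\lambda)$ while $\text{cof}(\mathcal{SN})=\mathfrak{d}_\lambda=\mathfrak{d}_\lambda^V$. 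Since $\mathfrak{c}_\kappa$ is pinned down by $\max(\vert 2^\kappa\cap V\vert,\lambda)$ and $\text{cof}(\mathcal{SN})$ is pinned down by $\mathfrak{d}_\lambda$ computed in the ground model, the whole problem reduces to arranging ground models in which these two cardinals stand in each of the three desired relations. First I would fix, in each case, an inaccessible $\kappa$ and a regular $\lambda>\kappa$, and then use standard Easton-style or $\kappa$-directed-closed preliminary forcing to prescribe the values of $2^\kappa$ and of $\mathfrak{d}_\lambda$ in $V$ before applying $\mathbb{P}_\lambda$.

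For the case $\mathfrak{c}_\kappa=\text{cof}(\mathcal{SN})$, I would take a ground model satisfying $\vert 2^\kappa\vert<\lambda$, $\lambda^\kappa=\lambda$, and $\mathfrak{d}_\lambda=\lambda$ (for instance arranging $2^\lambda=\lambda^+$, so that $\mathfrak{d}_\lambda=\lambda^+$ fails but GCH-type hypotheses give $\mathfrak{d}_\lambda=\lambda$ when $\text{cf}(\lambda)=\lambda$; more safely, pick $\lambda$ with $\mathfrak{d}_\lambda=\lambda$ in $V$). Then after forcing $\mathfrak{c}_\kappa=\max(\vert 2^\kappa\cap V\vert,\lambda)=\lambda=\mathfrak{d}_\lambda^V=\text{cof}(\mathcal{SN})$, giving equality. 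For $\mathfrak{c}_\kappa<\text{cof}(\mathcal{SN})$, I would instead arrange $\mathfrak{d}_\lambda^V=\lambda^+>\lambda=\mathfrak{c}_\kappa$ in the extension: start with a ground model where $\vert 2^\kappa\vert<\lambda$, $\lambda^\kappa=\lambda$, but $\mathfrak{d}_\lambda=\lambda^+$ (force $2^\lambda$ large while keeping $2^{<\lambda}$ small and $\text{cf}(\lambda)$ regular so that the dominating number at $\lambda$ jumps above $\lambda$). For $\mathfrak{c}_\kappa>\text{cof}(\mathcal{SN})$, I would use the other hypothesis branch: take $V\vDash\vert 2^\kappa\vert\geq\lambda$, in fact $\vert 2^\kappa\vert>\lambda$, together with $\mathfrak{d}_\lambda=\lambda$; then $\mathfrak{c}_\kappa=\vert 2^\kappa\cap V\vert>\lambda=\mathfrak{d}_\lambda^V=\text{cof}(\mathcal{SN})$.

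The key steps in order are therefore: (i) quote the previous theorem's two identities $\mathfrak{c}_\kappa=\max(\vert 2^\kappa\cap V\vert,\lambda)$ and $\text{cof}(\mathcal{SN})=\mathfrak{d}_\lambda^V$; (ii) for each of the three inequalities exhibit a ground model, via preliminary forcing over some model of GCH with $\kappa$ inaccessible, realizing the required relationship between $\max(\vert 2^\kappa\cap V\vert,\lambda)$ and $\mathfrak{d}_\lambda$; (iii) check in each case that the relevant hypothesis of the previous theorem ($\vert 2^\kappa\vert\geq\lambda$, or $\vert 2^\kappa\vert<\lambda\land\lambda^\kappa=\lambda$) is satisfied, and that the preliminary forcing preserves the inaccessibility of $\kappa$ and the regularity of $\lambda$; (iv) conclude by applying $\mathbb{P}_\lambda$ and reading off the two cardinals.

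The main obstacle I expect is step (ii) in the cases requiring $\mathfrak{d}_\lambda\neq\lambda$ while simultaneously controlling $\vert 2^\kappa\vert$ and preserving $\lambda^\kappa=\lambda$. Manipulating the dominating number $\mathfrak{d}_\lambda$ at a regular $\lambda>\kappa$ without disturbing $\kappa$'s inaccessibility requires a product or iteration whose support and closure are tuned so as to add dominating functions at $\lambda$ (or to blow up $2^\lambda$ so that $\mathfrak{d}_\lambda$ rises) while being sufficiently closed below $\kappa$ and below $\lambda$ to preserve the cardinal arithmetic hypotheses; verifying the simultaneous preservation of $\lambda^\kappa=\lambda$ and of the value of $\vert 2^\kappa\vert$ is the delicate bookkeeping. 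Once the ground models are in place, the remainder is a direct substitution into the previous theorem.
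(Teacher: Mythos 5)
Your overall strategy---instantiate the final theorem of Section 3 with a suitable ground model and read off $\mathfrak{c}_\kappa=\max(\vert 2^\kappa\cap V\vert,\lambda)$ versus $\mathrm{cof}(\mathcal{SN})=\mathfrak{d}_\lambda^V$---is the same as the paper's for two of the three cases (the paper gets the case $\mathfrak{c}_\kappa<\mathrm{cof}(\mathcal{SN})$ for free from Theorem \ref{T1} under $CH$, with no forcing). But two of your three proposed ground models cannot exist. For $\lambda$ regular one always has $\mathfrak{d}_\lambda\geq\mathfrak{b}_\lambda\geq\lambda^+$: given any $\lambda$ many functions $f_\alpha\in\lambda^\lambda$, the function $g(\beta)=\sup_{\alpha\leq\beta}f_\alpha(\beta)+1$ eventually dominates each of them, so no family of size $\lambda$ is dominating. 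Hence your equality case (``pick $\lambda$ with $\mathfrak{d}_\lambda=\lambda$ in $V$'', both sides equal to $\lambda$) and your reversal case ($\vert 2^\kappa\vert>\lambda$ together with $\mathfrak{d}_\lambda=\lambda$) are vacuous; no amount of preliminary forcing produces such a $V$.

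The fix, which is what the paper does, is to accept $\mathfrak{d}_\lambda^V\geq\lambda^+$ and instead push $\vert 2^\kappa\cap V\vert$ up to meet or exceed it. Taking $\lambda=\kappa^+$: for equality arrange $V\vDash\vert 2^\kappa\vert=\kappa^{++}\land\mathfrak{d}_{\kappa^+}=\kappa^{++}$ (e.g.\ force over $GCH$ with a $\kappa^{++}$-product of $\kappa$-Cohen forcing) and then force with $\mathbb{P}_{\kappa^+}$, getting $\mathfrak{c}_\kappa=\max(\kappa^{++},\kappa^+)=\kappa^{++}=\mathfrak{d}_{\kappa^+}^V=\mathrm{cof}(\mathcal{SN})$; for the reversal arrange $\vert 2^\kappa\vert=\kappa^{+++}$ while keeping $\mathfrak{d}_{\kappa^+}=\kappa^{++}$, so that $\mathfrak{c}_\kappa=\kappa^{+++}>\kappa^{++}=\mathrm{cof}(\mathcal{SN})$. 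Note also that the hypothesis branch used here is $\vert 2^\kappa\vert\geq\lambda$, so the worry you raise about preserving $\lambda^\kappa=\lambda$ does not arise. Your third case ($\mathfrak{c}_\kappa<\mathrm{cof}(\mathcal{SN})$) is correct but over-engineered: under $CH$, Theorem \ref{T1} already gives $\mathrm{cof}(\mathcal{SN})=\mathfrak{d}_{\kappa^+}\geq\kappa^{++}>\kappa^+=\mathfrak{c}_\kappa$ outright.
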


\begin{proof}
$\mathfrak{c}_\kappa < \text{cof}(\mathcal{SN})$ holds under $CH$. For $\mathfrak{c}_\kappa = \text{cof}(\mathcal{SN})$ assume that $V \vDash \vert 2^\kappa \vert = \kappa^{++} \land \mathfrak{d}_{\kappa^+}=\kappa^{++}$ (e.g. by forcing over $GCH$ with a $\kappa^{++}$-product of $\kappa$-Cohen forcing) and force with $\mathbb{P}_{\kappa^+}$. For $\mathfrak{c}_\kappa > \text{cof}(\mathcal{SN})$ assume $V \vDash \vert 2^\kappa \vert = \kappa^{+++} \land \mathfrak{d}_{\kappa^+}=\kappa^{++}$ (e.g. by forcing over $GCH$ with a $\kappa^{+++}$-product of $\kappa$-Cohen forcing) and again force with  $\mathbb{P}_{\kappa^+}$.
\end{proof}

\section{A model for $\text{cov}(\mathcal{SN}) < \text{add}(\mathcal{M}_\kappa)$}

In this section we first generalize the Galvin–Mycielski–Solovay theorem to $\kappa$ inaccessible. Then we shall assume that $\kappa$ is strongly unfoldable, and want to construct a model for $\text{cov}(\mathcal{SN}) < \text{add}(\mathcal{M}_\kappa)$. Indeed this will hold in the $\kappa$-Hechler model.

\begin{definition}
Let $X \subset 2^\kappa$. We say that $X$ is meager-shiftable iff $\forall D \subset 2^\kappa$, $D= \bigcap_{i<\kappa} D_i$ for some $(D_i)_{i<\kappa}$ open dense, there exists $y \in 2^\kappa$ such that $X + y \subset D$.
\end{definition}

The following result is well-known in the classical case:

\begin{theorem} \label{T3}
{(GMS)} $X \in \mathcal{SN}$ iff $X$ is meager-shiftable.
\end{theorem}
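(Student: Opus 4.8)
The plan is to prove both implications, working throughout with the group structure on $2^\kappa$ given by coordinatewise addition modulo $2$. The key elementary observation is that for $s \in 2^{<\kappa}$ one has $(x+y)\restriction s = (x\restriction s) + (y\restriction s)$, so every translation $z \mapsto z+y$ is a homeomorphism sending a basic clopen set $[\eta]$ to the basic clopen set $[\eta + (y\restriction\text{dom}(\eta))]$. I will use this freely to move between membership of $x+y$ in a target set and membership of $x$ in the corresponding translated target.

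For the direction ``meager-shiftable $\Rightarrow X\in\mathcal{SN}$'' I would fix a strictly increasing $f\in\kappa^\kappa$; it suffices to produce an $f$-cover, since any $g$ is dominated by a strictly increasing $f$ and a cover by pieces of length $f(j)\ge g(j)$ truncates to a $g$-cover. The idea is to encode the covering demand for $f$ directly into a comeager set of the form $Y(\sigma)$. Concretely, I would construct a single $\sigma\in(2^{<\kappa})^\kappa$ with $\text{dom}(\sigma(j))=f(j)$ for all $j$ whose targets are dense, i.e. $\forall s\in 2^{<\kappa}\,\forall i<\kappa\,\exists j\ge i\colon \sigma(j)\supseteq s$; then $Y(\sigma)=\bigcap_{i<\kappa}\bigcup_{j\ge i}[\sigma(j)]$ is comeager and of the form required by the definition of meager-shiftable. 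Building $\sigma$ is pure bookkeeping: since $\kappa$ is inaccessible, $\vert 2^{<\kappa}\vert=\kappa$, and using a pairing of $\kappa$ with $\kappa\times\kappa$ with cofinal fibers (of the kind used to define $B^\lambda$) I can assign to each $s$ a cofinal set of slots $j$ with $f(j)\ge\text{dom}(s)$ and place there the extension of $s$ by zeros up to length $f(j)$. Applying meager-shiftability to $Y(\sigma)$ yields $y$ with $X+y\subseteq Y(\sigma)$; hence for each $x\in X$ the set $\{j\colon x+y\in[\sigma(j)]\}$ is cofinal, so setting $\eta_j:=\sigma(j)+(y\restriction f(j))\in 2^{f(j)}$ gives $x\in[\eta_j]$ for cofinally many $j$. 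Thus $X\subseteq\bigcup_{j<\kappa}[\eta_j]$ is the desired $f$-cover, and $X\in\mathcal{SN}$.

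For the converse ``$X\in\mathcal{SN}\Rightarrow$ meager-shiftable'' I would fix an arbitrary comeager $D=\bigcap_{i<\kappa}D_i$ with $D_i$ open dense. Because $\kappa$ is regular, $<\kappa$-intersections of open sets are open (a union of fewer than $\kappa$ bounded strings is bounded), and by the $\kappa$-Baire property of $2^\kappa$ such intersections of dense open sets are dense; so I may pass to $U_\xi:=\bigcap_{i\le\xi}D_i$ and assume the $U_\xi$ are open dense and decreasing with $\bigcap_{\xi<\kappa}U_\xi=D$. Next I construct a continuous increasing sequence of block boundaries $(k_\xi)_{\xi<\kappa}$, $k_0=0$, $k_\gamma=\sup_{\xi<\gamma}k_\xi$ at limits, with the good-extension property that every $s\in 2^{k_\xi}$ has an extension $t_s\in 2^{k_{\xi+1}}$ with $[t_s]\subseteq U_\xi$; this is possible since $U_\xi$ is open dense and, by inaccessibility, $\vert 2^{k_\xi}\vert<\kappa$, so the supremum of the fewer than $\kappa$ required lengths stays below $\kappa$. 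I then invoke $\mathcal{SN}$ so as to obtain a \emph{cofinal} cover: partitioning $\kappa$ into $\kappa$ index-blocks each of size $\kappa$ and covering $X$ separately along each block, I get $(\eta_\xi)_{\xi<\kappa}$ with $\eta_\xi\in 2^{k_{\xi+1}}$ such that each $x\in X$ lies in $[\eta_\xi]$ for a set of $\xi$ of cardinality $\kappa$, hence for cofinally many $\xi$. Finally I define $y$ block by block: having determined $y\restriction k_\xi$, set $s_\xi:=(\eta_\xi\restriction k_\xi)+(y\restriction k_\xi)$ and put $y\restriction[k_\xi,k_{\xi+1}):=(\eta_\xi+t_{s_\xi})\restriction[k_\xi,k_{\xi+1})$. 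A direct computation shows that $x\in[\eta_\xi]$ forces $(x+y)\restriction k_{\xi+1}=t_{s_\xi}$, whence $x+y\in U_\xi$; since each $x$ is covered cofinally and the $U_\xi$ are decreasing, $x+y\in\bigcap_{\xi<\kappa}U_\xi=D$, so $X+y\subseteq D$.

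I expect the second direction to be the main obstacle. The delicate points are genuinely new relative to the classical case over $\omega$: one must carry the block construction continuously through limit ordinals while preserving the good-extension property for \emph{every} index $U_\xi$, including limit indices, and one must guarantee that the cover is cofinal so that all the $U_\xi$ are eventually met. This cofinality is exactly where regularity of $\kappa$ enters, through the fact that a subset of $\kappa$ of size $\kappa$ is cofinal. The reductions at the outset (that the dense intersection may be taken over open dense decreasing sets) rest on the $\kappa$-Baire theorem for $2^\kappa$, and the recurring counting bound $\vert 2^{k_\xi}\vert<\kappa$ relies essentially on $\kappa$ being inaccessible, so both the combinatorics and the topology must be checked to behave as in the countable case precisely under these hypotheses.
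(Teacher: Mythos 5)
Your proposal is correct and follows essentially the same route as the paper: the first direction translates a dense (co)meager witness set back through the shift $y$, and the second direction is the same block construction of a normal sequence of lengths $(c_i)$ with the good-extension property, a cofinal cover obtained by splitting $\kappa$ into $\kappa$ cofinal pieces, and the blockwise definition of $y$ so that $[s_i + y\restriction c_{i+1}]\subset D_i$. The only cosmetic differences are that you shift the comeager set $Y(\sigma)$ where the paper uses a single open dense set, and you pre-select the extensions $t_s$ where the paper finds them on the fly.
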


\begin{proof}
First we shall show that meager-shiftable implies strong measure zero: Let $X \subset 2^\kappa$ be meager-shiftable and let $f \in \kappa^\kappa$. Choose $(s_i)_{i < \kappa}$ such that $s_i \in 2^{f(i)}$ and $D:= \bigcup_{i < \kappa} [s_i]$ is open dense. Since $X$ is meager-shiftable, there exists $y \in 2^\kappa$ such that $X+y \subset D$. Define $t_i:= s_i + y \restriction f(i)$. But then $X \subset \bigcup_{i< \kappa} [t_i]$. Hence $X$ is strong measure zero.\\
\\
We shall now show that strong measure zero implies meager-shiftable: Let $X \subset 2^\kappa$ be strong measure zero and let $D=\bigcap_{i<\kappa} D_i$ be an intersection of arbitrary dense open sets. W.l.o.g. let the $D_i$'s be decreasing. Now construct a normal sequence $(c_i)_{i<\kappa}$,  $c_i \in \kappa$, such that for every $i < \kappa$ and every $s \in 2^{c_i}$ there exists $t \in 2^{c_{i+1}}$ such that $[t] \subset D_i$. Define $f(i):=c_{i+1}$ and find $(s_i)_{i<\kappa}$ such that $s_i \in 2^{f(i)}$ and $X \subset \bigcap_{j< \kappa} \bigcup_{i\geq j} [s_i]$. We shall now inductively construct $y \in 2^\kappa$ such that $\forall i < \kappa \colon [s_i + y \restriction c_{i+1}] \subset D_i$:

\begin{itemize}
\item $i=0$: Choose $t_0 \in 2^{c_1}$ such that $[t_0] \subset D_0$ and set $y \restriction c_1:= s_0 + t_0$. Hence, $s_0 +y \restriction c_1 =t_0$ and so $[s_0 + y \restriction c_1] \subset D_0$.
\item $i=i'+1$: Assume that $y \restriction c_{i'+1}$ has already be constructed. Find $t_i \,  \triangleright \, s_i \restriction c_i + y \restriction c_i$, $t_i \in 2^{c_{i+1}}$, such that $[t_i] \subset D_i$. Set $y \restriction c_{i+1}:= s_i +t_i$. Then $s_i+ y \restriction c_{i+1} = t_i$ and $y \restriction c_{i+1} \triangleright y \restriction c_i$.
\item $i$ is a limit: Set $y \restriction c_i:= \bigcup_{j< i} y \restriction c_j$ and proceed as in the successor step to construct $y \restriction c_{i+1}$.
\end{itemize}
Now we will show that $ \forall i < \kappa \colon X+y \subset D_i$. To this end let $i < \kappa$ and $x \in X$ be arbitrary. We can now find $i' \geq i$ such that $s_{i'} \triangleleft x$. It follows that $x+y \in [s_{i'}+ y \restriction c_{i'+1}] \subset D_{i'} \subset D_i$. This finishes the proof.
\end{proof}

\begin{definition} \label{D3}
We call a cardinal $\kappa$ strongly unfoldable iff $\kappa$ is inaccessible and for every cardinal $\theta$ and every $A \subset \kappa$ there exists a transitive model $M$, such that $A \in M$ and $M \vDash \text{ZFC}$, and an elementary embedding $j \colon M \rightarrow N$ with critical point $\kappa$, such that $j(\kappa) \geq \theta$ and $V_\theta \subset N$.
\end{definition}

Before we can construct the model, we will need some definitions: 

\begin{definition}
We say that a forcing notion $\mathcal{P}$ has precaliber $\kappa^+$ iff for every $X \in [\mathcal{P}]^{\kappa^+}$ there exists $Y \in [X]^{\kappa^+}$ such that $Y$ is $\text{centered}_{<\kappa}$ (see Definition \ref{D1}).
\end{definition}

While the previous definition is about forcing in general, the next definition is concerned with $\text{cov}(\mathcal{SN})$:

\begin{definition} \label{D2}
Let $\{\mathcal{D}_\alpha \colon \alpha < \kappa^+\}$ be a sequence of families of open subsets of $2^\kappa$. We say that the family $\{\mathcal{D}_\alpha \colon \alpha < \kappa^+\}$ is good iff $ \forall X \in [\kappa^+]^{\kappa^+} \colon \bigcup_{\alpha \in X} \bigcap \mathcal{D}_\alpha = 2^\kappa$.
\end{definition}

The motivation behind Definition \ref{D2} is that for each $\alpha < \kappa^+$ the set $\bigcap \mathcal{D}_\alpha$ could be a strong measure zero set. Then a good family corresponds to a family of strong measure zero sets of size $\kappa^+$ such that every subfamily of size $\kappa^+$ covers $2^\kappa$.

\begin{lemma}
Suppose that $\{\mathcal{D}_\alpha \colon \alpha < \kappa^+\}$ is a good family in $V$, $V \vDash \kappa \,\, \text{is weakly compact}$ and $\mathcal{P}$ is a ${<}\kappa$-closed forcing notion, which has precaliber $\kappa^+$. Then $\{\mathcal{D}_\alpha \colon \alpha < \kappa^+\}$ is also good in $V^{\mathcal{P}}$.
\end{lemma}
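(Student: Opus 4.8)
The plan is to argue by contradiction in $V^{\mathcal P}$, reducing the failure of goodness to a statement about a single \emph{ground model} real that would violate goodness in $V$. So suppose some $p\in\mathcal P$ forces that $\dot X\in[\kappa^+]^{\kappa^+}$ and $\dot x\in 2^\kappa$ witness a failure, i.e. $p\Vdash\forall\alpha\in\dot X\ \dot x\notin\bigcap\mathcal D_\alpha$. Since precaliber $\kappa^+$ implies the $\kappa^+$-c.c. (an antichain of size $\kappa^+$ would have a $\mathrm{centered}_{<\kappa}$ subset, which is absurd), the set $X^*=\{\alpha<\kappa^+:\exists q\le p\ q\Vdash\alpha\in\dot X\}$ has size $\kappa^+$. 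For each $\alpha\in X^*$ I would choose $q_\alpha\le p$ and a member $D_\alpha\in\mathcal D_\alpha$ with $q_\alpha\Vdash\alpha\in\dot X\wedge\dot x\notin D_\alpha$; this is possible since $q_\alpha$ may be strengthened to decide which member of $\mathcal D_\alpha$ the generic real escapes. Here I use that a $<\kappa$-closed forcing adds no new element of $2^{<\kappa}$, so the codes of the open sets in $\mathcal D_\alpha$ are the same in $V$ and $V^{\mathcal P}$.

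Next I would apply precaliber $\kappa^+$ to thin $X^*$ to some $X'\in[X^*]^{\kappa^+}$ with $\{q_\alpha:\alpha\in X'\}$ being $\mathrm{centered}_{<\kappa}$. Using that $\mathcal P$ is $<\kappa$-closed, every potential value of $\dot x$ has all its proper initial segments in $V$, so I can define in $V$ the trees $T_\alpha=\{t\in 2^{<\kappa}:\exists r\le q_\alpha\ r\Vdash t\triangleleft\dot x\}$ and the closed sets $C_\alpha=[T_\alpha]$. Two observations drive the argument. First, since $q_\alpha\Vdash\dot x\notin D_\alpha$, no node of $T_\alpha$ lies in the code of $D_\alpha$; hence $C_\alpha\cap D_\alpha=\emptyset$, so every $z\in C_\alpha$ satisfies $z\notin\bigcap\mathcal D_\alpha$. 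Second, $\mathrm{centered}_{<\kappa}$-ness together with $<\kappa$-closure yields the $<\kappa$-intersection property: for $e\in[X']^{<\kappa}$ a common lower bound of $\{q_\alpha:\alpha\in e\}$ can be extended, using closure at limits below $\kappa$, to a decreasing sequence of length $\kappa$ deciding all of $\dot x$, whence $\bigcap_{\alpha\in e}C_\alpha\ne\emptyset$.

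Now weak compactness enters. For $t\in 2^{<\kappa}$ put $W(t)=\{\alpha\in X':t\in T_\alpha\}$. Since $\kappa$ is inaccessible, $|2^\gamma|<\kappa$ for $\gamma<\kappa$, and because $X'=\bigcup_{t\in 2^\gamma}W(t)$ while $\kappa^+$ is regular, some $W(t)$ with $\mathrm{dom}(t)=\gamma$ has size $\kappa^+$. Thus $U=\{t\in 2^{<\kappa}:|W(t)|=\kappa^+\}$ is a subtree of $2^{<\kappa}$ with levels of size $<\kappa$ and height $\kappa$, i.e. a $\kappa$-tree, so by the tree property for the weakly compact $\kappa$ it has a cofinal branch $z\in 2^\kappa$. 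The goal is to conclude that $z$ lies in $\kappa^+$-many of the $C_\alpha$: then $X''=\{\alpha\in X':z\in C_\alpha\}\in[\kappa^+]^{\kappa^+}$ and $z\notin\bigcap\mathcal D_\alpha$ for all $\alpha\in X''$, contradicting goodness of $\{\mathcal D_\alpha\}$ in $V$, which asserts precisely that no ground model real can avoid a $\kappa^+$-sized subfamily.

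The main obstacle is exactly this last step. The branch $z$ only guarantees $|W(z\restriction\gamma)|=\kappa^+$ for every $\gamma<\kappa$, whereas $\{\alpha:z\in C_\alpha\}=\bigcap_{\gamma<\kappa}W(z\restriction\gamma)$, and a decreasing $\kappa$-sequence of sets of size $\kappa^+$ may shrink to size $\le\kappa$ because indices can drop out at limit levels (the tree of a closed subset of $2^\kappa$ need not be closed under limits). Overcoming this is where the full strength of weak compactness must be invoked: the branch through $U$ has to be chosen coherently with the $<\kappa$-intersection property so that a $\kappa^+$-sized set of indices survives all limit stages. I expect the correct implementation to build the branch and the surviving index set simultaneously, using the compactness of $\kappa$ (equivalently, $L_{\kappa\kappa}$-compactness for theories of size $\le\kappa$) to upgrade the $<\kappa$-intersection property to the realization of the relevant $\kappa$-sized subfamilies, followed by a regularity/pigeonhole argument at the limit levels to retain $\kappa^+$-many indices. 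This coordination between the $\kappa$-tree branch and the $\kappa^+$-sized index set is the crux and the part I expect to be genuinely delicate.
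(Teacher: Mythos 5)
Your setup, the choice of the $q_\alpha$ and $D_\alpha$, the application of precaliber $\kappa^+$, and the derivation of the ${<}\kappa$-intersection property (every $Z\in[X']^{<\kappa}$ gives $\bigcup_{\alpha\in Z}D_\alpha\neq 2^\kappa$, witnessed by a ground-model real) all match the paper's proof in substance; the paper obtains the last of these by $\Pi^1_1$-absoluteness for ${<}\kappa$-closed forcing rather than by your explicit fusion argument, but both work. The gap you flag at the end, however, is genuine, and it cannot be closed with the tree $U=\{t\colon \vert W(t)\vert=\kappa^+\}$: a branch $z$ through $U$ only gives a decreasing $\kappa$-sequence of sets $W(z\restriction\gamma)$, each of size $\kappa^+$, and such a sequence can have empty intersection (take $h\colon\kappa^+\to\kappa$ with all preimages of final segments of size $\kappa^+$ and put $\alpha\in W_\gamma$ iff $h(\alpha)>\gamma$). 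No ``coordination'' of the branch with the intersection property will fix this, because $U$ is simply the wrong tree.

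The step you are missing is the characterization of weak compactness that the paper invokes: for $\kappa$ weakly compact, $2^\kappa$ is $\kappa$-compact, i.e.\ every open cover has a subcover of size ${<}\kappa$. Since you have shown that no subfamily of $\{D_\alpha\colon\alpha\in X'\}$ of size ${<}\kappa$ covers $2^\kappa$, $\kappa$-compactness immediately yields a single $z\in 2^\kappa\cap V$ with $z\notin D_\alpha$ for \emph{all} $\alpha\in X'$ simultaneously; then $z\notin\bigcap\mathcal{D}_\alpha$ for $\kappa^+$-many $\alpha$, contradicting goodness in $V$. If you prefer to keep a tree-property argument, replace $U$ by the tree $T$ of all $t\in 2^{<\kappa}$ such that no initial segment of $t$ lies in the (ground-model) code of any $D_\alpha$, $\alpha\in X'$. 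This $T$ is downward closed, and every level $\gamma<\kappa$ of $T$ is nonempty: otherwise each of the $\vert 2^\gamma\vert<\kappa$ nodes of length $\gamma$ would be covered by some $D_{\alpha}$, producing a subcover of size ${<}\kappa$ and contradicting your intersection property. Hence $T$ is a $\kappa$-tree, the tree property gives a cofinal branch $z$, and since avoidance of an open set is witnessed level by level, $z$ avoids every $D_\alpha$ with $\alpha\in X'$ --- nothing is lost at limit stages. Working with the complements $2^\kappa\setminus D_\alpha$ rather than with your trees $T_\alpha$ of possible values of $\dot{x}$ is exactly what makes the limit levels harmless.
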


\begin{proof}
Towards a contradiction assume that there are names $\dot{x}$, $\dot{X}$ and a condition $p$ such that $p \Vdash_{\mathcal{P}} \dot{x} \in 2^\kappa \land \dot{X} \in [\kappa^+]^{\kappa^+} \land \dot{x} \notin \bigcup_{\alpha \in \dot{X}} \bigcap \mathcal{D}_\alpha$.  Working in $V$, for every $\alpha < \kappa^+$ find $\eta_\alpha > \alpha$, $D_\alpha \in \mathcal{D}_{\eta_\alpha}$ and conditions $p_\alpha \leq p$ such that $p_\alpha \Vdash_{\mathcal{P}} \eta_\alpha \in \dot{X} \land \dot{x} \notin D_\alpha$. Because $\mathcal{P}$ has precaliber $\kappa^+$ we can find $A \in [\kappa^+]^{\kappa^+}$ such that $\{p_\alpha \colon \alpha \in A\}$ is $\text{centered}_{<\kappa}$. Let $Z \subset A$ be of size $<\kappa$ and let $q_Z$ be a lower bound of $\{p_\alpha \colon \alpha \in Z\}$. Then $q_Z \Vdash_{\mathcal{P}} \bigcup_{\alpha \in Z} D_\alpha \neq 2^\kappa$. By $\Pi_1^1$-absoluteness \footnote{This also guarantees that if $\mathbf{B}_1, \mathbf{B}_2 \in V$ are $\kappa$-Borel codes, then $V \vDash \mathbf{B}_1 =\mathbf{B}_2$ iff $V^{\mathcal{P}} \vDash \mathbf{B}_1 =\mathbf{B}_2$.} for ${<}\kappa$-closed forcing extensions (see \cite{Friedman}) $\bigcup_{\alpha \in Z} D_\alpha \neq 2^\kappa$ must hold in $V$. Since $\kappa$ is weakly compact it follows that $\bigcup_{\alpha \in A} D_\alpha \neq 2^\kappa$. Hence $\bigcup_{\beta \in \{ \eta_\alpha \colon \alpha \in A\}} \bigcap \mathcal{D}_\beta \neq 2^\kappa$. However, this is a contradiction to $\{\mathcal{D}_\alpha \colon \alpha < \kappa^+\}$ being a good family in $V$.
\end{proof}

We are now ready to prove the main theorem of this section:

\begin{theorem}
Let $V$ satisfy $\vert 2^\kappa\vert =\kappa^+$ and the strong unfoldability of $\kappa$ is indestructible by ${<}\kappa$-closed, $\kappa^+$-c.c. forcing notions (see \cite{Johnstone}). Define $\mathbb{P}$ to be a ${<}\kappa$-support iteration of $\kappa$-Hechler forcing of length $\kappa^{++}$. Then $V^{\mathbb{P}} \vDash \text{cov}(\mathcal{SN})= \kappa^+ <\text{add}(\mathcal{M}_\kappa)=\kappa^{++}$.
\end{theorem}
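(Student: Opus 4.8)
\textbf{The value of $\text{add}(\mathcal{M}_\kappa)$ and the easy bound on $\text{cov}(\mathcal{SN})$.} The computation of $\text{add}(\mathcal{M}_\kappa)$ is the routine half and I would run it exactly as in the previous model. Since $\mathbb{P}$ is a ${<}\kappa$-support iteration of $\kappa$-Hechler forcing of length $\kappa^{++}$, the generic Hechler reals are cofinal and mutually dominating, so (using the $\kappa^+$-c.c.\ to reflect any smaller dominating family) $\mathfrak{b}_\kappa=\mathfrak{d}_\kappa=\kappa^{++}$, and a ${<}\kappa$-support iteration adds cofinally many $\kappa$-Cohen reals, whence $\text{cov}(\mathcal{M}_\kappa)=\kappa^{++}$; by the identity $\text{add}(\mathcal{M}_\kappa)=\min(\mathfrak{b}_\kappa,\text{cov}(\mathcal{M}_\kappa))$ from \cite{Baumhauer} this yields $\text{add}(\mathcal{M}_\kappa)=\kappa^{++}$, and the same bookkeeping gives $\vert 2^\kappa\vert=\kappa^{++}$. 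The lower bound $\text{cov}(\mathcal{SN})\geq\kappa^+$ is equally immediate: $\mathcal{SN}$ is a proper $\kappa$-closed ideal, so a union of at most $\kappa$ strong measure zero sets is again strong measure zero and cannot exhaust $2^\kappa$.

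\textbf{A good family of strong measure zero sets in $V$.} The substance is the upper bound $\text{cov}(\mathcal{SN})\leq\kappa^+$, and the plan is to exploit the preservation lemma for good families. Working in $V$, where $\vert 2^\kappa\vert=\kappa^+$, I would construct a good family $\{\mathcal{D}_\alpha:\alpha<\kappa^+\}$ of families of open subsets of $2^\kappa$ such that each $\bigcap\mathcal{D}_\alpha$ lies in $\mathcal{SN}$. Unwinding Definition \ref{D2}, goodness is equivalent to the local demand that every $x\in 2^\kappa$ fail to lie in $\bigcap\mathcal{D}_\alpha$ for at most $\kappa$ many indices $\alpha$; enumerating $2^\kappa=\{x_\beta:\beta<\kappa^+\}$ I would build the $\mathcal{D}_\alpha$ by transfinite recursion so that this demand is met while each intersection stays strong measure zero, using the Galvin–Mycielski–Solovay characterisation of Theorem \ref{T3} to keep the intersections meager-shiftable, hence genuinely in $\mathcal{SN}$. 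A short cardinal count shows there is no clash here: goodness forces $\sum_{\alpha<\kappa^+}\vert 2^\kappa\setminus\bigcap\mathcal{D}_\alpha\vert\leq\kappa^+$, which is compatible with every intersection having complement of full size $\kappa^+$.

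\textbf{Preserving goodness through $\mathbb{P}$.} Next I would check that $\mathbb{P}$ meets the hypotheses of the preservation lemma. Crucially, and in contrast with the classical case, $\kappa$-Hechler forcing is ${<}\kappa$-closed: a decreasing sequence of length ${<}\kappa$ has stems whose domains are bounded below $\kappa$ by regularity, so it has a lower bound, and the ${<}\kappa$-support iteration inherits this closure. Moreover $\mathbb{P}$ has precaliber $\kappa^+$: given $\kappa^+$ conditions, a $\Delta$-system argument on their ${<}\kappa$-sized supports (legitimate since $\kappa^{<\kappa}=\kappa$) combined with the fact that each iterand is $\kappa$-$\text{centered}_{<\kappa}$ produces a $\text{centered}_{<\kappa}$ subfamily of full size. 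Finally, strong unfoldability implies $\kappa$ is weakly compact in $V$, and by the indestructibility hypothesis $\kappa$ stays weakly compact after every ${<}\kappa$-closed, $\kappa^+$-c.c.\ initial segment. I would therefore apply the preservation lemma—if need be stage by stage along the factorisation $\mathbb{P}\cong\mathbb{P}_\epsilon\ast\mathbb{P}_{[\epsilon,\kappa^{++})}$, the indestructibility supplying weak compactness in each $V^{\mathbb{P}_\epsilon}$—to conclude that $\{\mathcal{D}_\alpha:\alpha<\kappa^+\}$ remains good in $V^{\mathbb{P}}$. Taking $X=\kappa^+$ in Definition \ref{D2} then gives $\bigcup_{\alpha<\kappa^+}\bigcap\mathcal{D}_\alpha=2^\kappa$, so $\kappa^+$ sets cover $2^\kappa$ in $V^{\mathbb{P}}$.

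\textbf{The main obstacle.} The hard point, and the step I would treat most carefully, is that the covering sets must be strong measure zero \emph{in} $V^{\mathbb{P}}$, not merely in $V$. Since $\mathbb{P}$ adds dominating reals, membership in $\mathcal{SN}=\bigcap_f\mathcal{I}(f)$ becomes strictly harder in the extension: being strong measure zero in $V^{\mathbb{P}}$ means being $d_\epsilon$-small for every generic $\kappa$-Hechler real $d_\epsilon$, and the witnesses $\sigma\in\mathcal{S}(d_\epsilon)$ do not live in $V$. I would meet this by building the $\mathcal{D}_\alpha$ with the generic scale $(d_\epsilon)_{\epsilon<\kappa^{++}}$ in mind and by passing through the strong unfoldability embeddings $j\colon M\to N$, which by indestructibility are available in $V^{\mathbb{P}}$ with $V_{\kappa^{++}}\subseteq N$: the meager-shiftable reformulation of Theorem \ref{T3} is absolute enough that the strong measure zero property of each $\bigcap\mathcal{D}_\alpha$ can be made to reflect from $N$ down to $V^{\mathbb{P}}$. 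Granting this, the covering produced above consists of strong measure zero sets, so $\text{cov}(\mathcal{SN})\leq\kappa^+$; together with the lower bound and the first part, $\text{cov}(\mathcal{SN})=\kappa^+<\kappa^{++}=\text{add}(\mathcal{M}_\kappa)$ holds in $V^{\mathbb{P}}$.
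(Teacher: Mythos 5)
Your first paragraph (the computation $\text{add}(\mathcal{M}_\kappa)=\kappa^{++}$ and the trivial lower bound $\text{cov}(\mathcal{SN})\geq\kappa^+$) matches the paper, and your reading of the preservation lemma (precaliber $\kappa^+$ via a $\Delta$-system on supports, weak compactness surviving each ${<}\kappa$-closed, $\kappa^+$-c.c.\ stage by the indestructibility hypothesis) is sound. But the core of your plan --- build a good family $\{\mathcal{D}_\alpha:\alpha<\kappa^+\}$ \emph{in $V$} with each $\bigcap\mathcal{D}_\alpha$ strong measure zero, preserve goodness, and take $X=\kappa^+$ --- has a genuine gap exactly at the point you yourself flag as the main obstacle, and the fix you sketch is not an argument. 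Strong measure zero is not upward absolute here: $\mathbb{P}$ adds dominating reals, so a set that is in $\mathcal{SN}$ in $V$ must, in $V^{\mathbb{P}}$, admit covers adapted to functions $f$ dominating all of $\kappa^\kappa\cap V$, and nothing in your construction supplies such covers. The appeal to strong unfoldability embeddings $j\colon M\to N$ and to the meager-shiftable formulation being ``absolute enough'' names no mechanism: meager-shiftability in $V^{\mathbb{P}}$ quantifies over all comeager sets of $V^{\mathbb{P}}$, most of which have no code in $V$, so there is nothing to reflect. There is also an unresolved tension: for goodness to survive, each newly added real must land in almost every $\bigcap\mathcal{D}_\alpha$, so these intersections must be large enough to absorb the generic reals while remaining $\mathcal{SN}$ against the generic dominating functions --- that is precisely the hard part, and a family built in $V$ without reference to the iteration cannot be expected to do both.

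The paper's route is different at exactly this point. The families are indexed by $\alpha<\kappa^{++}$ (not $\kappa^+$) and are defined \emph{in the extension}, tied to the stages of the iteration: $\mathcal{D}_\alpha$ is the collection of all dense open sets with $\kappa$-Borel code in $V^{\mathbb{P}_{\alpha+1}}$ that contain $2^\kappa\cap V^{\mathbb{P}_\alpha}$. For $X\subseteq\kappa^{++}$ cofinal, $E(X)=\bigcap_{\alpha\in X}\bigcap\mathcal{D}_\alpha$ is strong measure zero by Theorem \ref{T3}, because every comeager set of $V^{\mathbb{P}}$ is coded at some stage $\alpha'\in X$ and the Cohen real added at stage $\alpha'$ shifts it over $2^\kappa\cap V^{\mathbb{P}_{\alpha'}}$, putting the shifted set into $\mathcal{D}_{\alpha'}$; this is how membership in $\mathcal{SN}$ is obtained directly in $V^{\mathbb{P}}$, with no absoluteness of $\mathcal{SN}$ needed. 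The preservation lemma is then used only inside a minimal-counterexample argument --- applied to the quotient $\mathbb{P}/G_{\alpha^*}$ over $V^{\mathbb{P}_{\alpha^*}}$ for $\alpha^*$ of cofinality $\kappa^+$ --- to show that every $x\in 2^\kappa$ misses $\bigcap\mathcal{D}_\alpha$ for fewer than $\kappa^+$ many $\alpha$; partitioning $\kappa^{++}$ into $\kappa^+$ cofinal pieces $X_\xi$ then gives $2^\kappa=\bigcup_{\xi<\kappa^+}E(X_\xi)$. You would need to replace your second and fourth steps by this stage-indexed construction (or something equivalent) for the proof to go through.
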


\begin{proof}
Since $\text{add}(\mathcal{M}_\kappa)= \min( \mathfrak{b}_\kappa, \text{cov}(\mathcal{M}_\kappa)$, it follows easily that $\text{add}(\mathcal{M}_\kappa)=\kappa^{++}$ in $V^{\mathbb{P}}$.
Hence we will only show $\text{cov}(\mathcal{SN})= \kappa^+$:\\
\\
Working in $V^{\mathbb{P}}$ we define for $\alpha < \kappa^{++}$  the set $$\mathcal{D}_\alpha:=\{D \colon D \, \text{is dense open with} \,\, \kappa\text{-Borel code in} \, V^{\mathbb{P}_{\alpha+1}} \land 2^\kappa \cap V^{\mathbb{P}_\alpha} \subseteq D\}.$$
For $X\subseteq \kappa^{++}$ define $E(X):= \bigcap_{\alpha \in X} \bigcap \mathcal{D}_\alpha$. If $X$ is cofinal in $\kappa^{++}$, then $E(X)$ is smz: By GMS (see Theorem \ref{T3}) it is enough to show that $E(X)$ is meager-shiftable: Let $D=\bigcap_{i<\kappa} D_i$ be an intersection of arbitrary dense open sets and find $\alpha' \in X$ such that $D$ is coded in $V^{\mathbb{P}_{\alpha'}}$. But then $2^\kappa \cap V^{\mathbb{P}_{\alpha'}} \subseteq D + c_{\alpha'}$, where $c_{\alpha'}$ is some Cohen real over $V^{\mathbb{P}_{\alpha'}}$ added by the next iterand of Hechler forcing. Hence $D + c_{\alpha'} \in \mathcal{D}_{\alpha'}$, and therefore $E(X) \subseteq \bigcap \mathcal{D}_{\alpha'} \subseteq D + c_{\alpha'}$.\\
\\
We claim that $\forall x \in 2^\kappa \colon \vert \{\alpha < \kappa^{++} \colon x \notin \bigcap \mathcal{D}_\alpha\} \vert < \kappa^+$ holds in $V^{\mathbb{P}}$. Towards a contradiction assume that $\alpha^* < \kappa^{++}$ is the minimal ordinal such that there exists $X \in [\alpha^*]^{\kappa^+}$ with $\bigcup_{\alpha \in X} \bigcap \mathcal{D}_\alpha \neq 2^\kappa$. This observation means that the family $\{\mathcal{D}_\alpha \colon \alpha \in \alpha^*\}$ (note that $\vert \alpha^* \vert= \kappa^+$) is not good in $V^{\mathbb{P}}$. Since the family $\{\mathcal{D}_\alpha \colon \alpha \in \alpha^*\}$ is in $V^{\mathbb{P}_{\alpha^*}}$, $\kappa$ is still weakly compact in   $V^{\mathbb{P}_{\alpha^*}}$, and the quotient forcing $\mathbb{P}/G_{\alpha^*}$ is ${<}\kappa$-closed and has precaliber $\kappa^+$, it follows by the previous lemma that $\{\mathcal{D}_\alpha \colon \alpha \in \alpha^*\}$ is also not good in $V^{\mathbb{P}_{\alpha^*}}$. By the minimality of $\alpha^*$ it follows that $X$ is cofinal in $\alpha^*$ and $\text{otp}(X)= \kappa^+$, hence $\text{cf}(\alpha^*)=\kappa^+$. Now working in $V^{\mathbb{P}_{\alpha^*}}$, for any $Y \in [\alpha^*]^{\kappa^+}\cap V^{\mathbb{P}_{\alpha^*}}$ if $Y$ is not cofinal in $\alpha^*$, then $2^\kappa \cap V^{\mathbb{P}_{\alpha^*}} \subseteq \bigcup_{\alpha \in Y} \bigcap \mathcal{D}_\alpha$ must hold because of the minimality of $\alpha^*$. If $Y$ is cofinal in $\alpha^*$, then it holds that $2^\kappa \cap V^{\mathbb{P}_{\alpha^*}} = \bigcup_{\alpha \in Y} 2^\kappa \cap V^{\mathbb{P}_{\alpha}} \subseteq \bigcup_{\alpha \in Y} \bigcap \mathcal{D}_\alpha$. But this is a contradiction to $\{\mathcal{D}_\alpha \colon \alpha \in \alpha^*\}$ not being good in $V^{\mathbb{P}_{\alpha^*}}$.\\
\\
Now let $\{X_\xi \colon \xi < \kappa^+\}$ be a partition of $\kappa^{++}$ into cofinal subsets. It follows from the above claim that $\bigcup_{\xi < \kappa^+} E(X_\xi)= 2^\kappa$, because for every $x \in 2^\kappa$ there must exist $\xi < \kappa^+$ such that for every $\alpha \in X_\xi$ it holds that $x \in \bigcap \mathcal{D}_\alpha$. Hence $\text{cov}(\mathcal{SN})= \kappa^+$.
\end{proof}

\newpage
\bibliography{refs} 
\bibliographystyle{alpha}

\end{document}